\numberwithin{equation}{section}
\theoremstyle{plain}
\newtheorem{theorem}{Theorem}[section]
\newtheorem{lemma}[theorem]{Lemma}
\newtheorem{proposition}[theorem]{Proposition}
\theoremstyle{definition}
\newtheorem{definition}[theorem]{Definition}
\theoremstyle{remark}
\newtheorem{remark}[theorem]{Remark}
\newtheorem{case[theorem]}{Case}
\date{\today}      
\author{W. Burstein} 
\address{Department of Mathematics, University of Rochester, Rochester, NY}
\email{wburste2@ur.rochester.edu}
\begin{document}

\title[$\Lambda_p$ Style Bounds in Orlicz Spaces Close to $L^2$]{$\Lambda_p$ Style Bounds in Orlicz Spaces Close to $L^2$}  


\begin{abstract}
Let $(\varphi_i)_{i=1}^n$ be mutually orthogonal functions on a probability space such that $\|\varphi_i\|_\infty \leq 1 $ for all $i \in [n]$.  Let $\alpha > 0$.  Let $\Phi(u) = u^2 \log^{\alpha}(u)$ for $u \geq u_{0}$, and $\Phi(u) = c(\alpha) u^2$ otherwise.  $u_0 \geq e$ and $c(\alpha)$ are constants chosen so that $\Phi$ is a Young function, depending only on $\alpha$.  Our main result shows that with probability at least $1/4$ over subsets $I$ of $[n]$, where $I$ is constructed by choosing each index of $[n]$ independently from a Bernoulli distribution, the following holds: $|I| \geq \frac{n}{e \log^{\alpha+1}(n)} $ and for any $a \in \mathbb{C}^n$,
$$
\left \|\sum_{i \in I} a_i \varphi_i \right \|_{\Phi} \leq K(\alpha) \log^{\frac{\alpha}{2}}(\log n) \cdot \|a\|_2.
$$
$K(\alpha)$ is a constant depending only on $\alpha$.
\vskip 0.125in
In the main Theorem of \cite{Ryou22}, Ryou proved the result above to a constant factor, depending on $p$ and $\alpha$, when the Orlicz space is a $L^p(\log L)^{p\alpha}$ space for $p > 2$ where $|I| \sim \frac{n^{2/p}}{\log^{2 \alpha /p}(n)}$.  However, their work did not extend to the case where $p=2$, an open question in \cite{Iosevich25}.  Our result resolves the latter question up to $\log \log n$ factors.   Moreover, our result sharpens the constants of Limonova's main result in \cite{Limonova23} from a factor of $\log n$ to a factor of $\log \log n$, if the orthogonal functions are bounded by a constant.  In addition, our proof is much shorter and simpler than the latter's.
\vskip 0.125in
Finally, to complement our main result, we give a probabilistic lower bound (subsets of $[n]$ are selected by a Bernoulli distribution over $[n]$'s indices) that matches our main result's upper bound.  Hence, if our bounds can be improved, the improvement cannot use the common probabilistic technique: subset selection by a Bernoulli distribution on indices of $[n]$.
\end{abstract}  

\maketitle

\tableofcontents

\section{Introduction}
Let $Z$ be a random variable on a probability space, $(\Omega, P)$, and let $q \geq 1$.  We write the norm of $Z \in L^{q}(\Omega)$ as,
$$
\|Z\|_q := \left ( 
\mathbb{E}\left [|Z|^q \right]
\right )^{1/q}.
$$
Let $(a_1,...,a_n) \in \mathbb{C}^n$.
The sequence $(\varphi_i : \Omega \rightarrow \mathbb{C})_{i=1}^n$ induces the function $f = \sum_{i=1}^n a_i \varphi_i$.  We write $a_f:=(a_1,...,a_n)$ as the coefficients that parametrize $f$.
Let $\chi_t(x) = \exp(2 \pi i t x)$.  Let $\Omega := [0, 1]$ and $p > 2$.  A subset $S$ of $\mathbb{Z}$ is called a $\Lambda(p)$-set if
$$
\|f\|_{L^p(\Omega)} \leq C(p) \cdot \|a_f\|_2
$$
for all $f \in \text{span}(\chi_t)_{t \in S}$, where $C(p)$ is a constant depending only on $p$. 
\vskip 0.125in
In \cite{Rudin60}, for each even integer $n > 2$, Rudin explicitly constructs $\Lambda(n)$-sets which are not $\Lambda(n + \varepsilon)$-sets for all $\varepsilon > 0$.  The latter problem becomes more challenging when $n$ is odd since it is difficult to construct explicit examples.  In \cite{Bourgain89}, Bourgain overcomes these difficulties by using the probabilistic method to show generically, for any $p > 2$, there exist $\Lambda(p)$-sets which are not $\Lambda(p+ \varepsilon)$-sets for all $\varepsilon > 0$.  To solve the latter problem, Bourgain proved the following.
\begin{theorem}
\label{BourgainMain}
Let $(\varphi_i : \Omega \longrightarrow \mathbb{C})_{i=1}^n$ be mututally orthogonal functions on a probability space, $(\Omega, P)$, where $\|\varphi_i\|_{\infty} \leq 1$.  Let $p > 2$.  Then there exists a subset $S \subset [n]$ such that $|S| \geq n^{2/p}$ and 
$$
\|f\|_{L^p(\Omega)} \leq C(p) \cdot \|a_f\|_2
$$
for all $f \in \text{span}( \varphi_i)_{i \in S}$, where $C(p)$ is a constant depending only on $p$.
\end{theorem}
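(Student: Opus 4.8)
I would follow Bourgain's probabilistic selection scheme. Fix $\delta=n^{2/p-1}\in(0,1)$ and let $\xi_1,\dots,\xi_n$ be independent $\{0,1\}$-valued random variables with $\mathbb{E}\xi_i=\delta$; set $S=\{i\in[n]:\xi_i=1\}$, so $\mathbb{E}|S|=\delta n=n^{2/p}$. I need two events to occur simultaneously with positive probability: first, $|S|\ge n^{2/p}/2$, which holds off an exponentially small set by a Chernoff bound; and second, the synthesis operator $T_S\colon\ell^2_S\to L^p(\Omega)$, $T_Sa=\sum_{i\in S}a_i\varphi_i$, satisfies $\|T_S\|\le C(p)$. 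For the second event the plan is to bound the \emph{expectation} $\mathbb{E}_\xi\|T_S\|_{\ell^2\to L^p}$ by a constant depending only on $p$ and then invoke Markov's inequality, giving $\|T_S\|\le 4\,\mathbb{E}_\xi\|T_S\|$ with probability at least $3/4$. Intersecting with the size event (and using that small $n$ is trivial after enlarging $C(p)$) yields a set $S$ as required, up to relabelling the constant and replacing $n^{2/p}/2$ by $n^{2/p}$ via a marginally larger $\delta$. Everything thus reduces to the inequality $\mathbb{E}_\xi\|T_S\|_{\ell^2\to L^p}\le C(p)$.

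To attack that inequality I would pass to the adjoint. Since $\|T_S\|=\|T_S^*\|$ and $T_S^*\colon L^{p'}(\Omega)\to\ell^2$ sends $g$ to $(\xi_i\,\widehat g(i))_i$ with $\widehat g(i)=\int_\Omega g\,\overline{\varphi_i}$, one has
\[
\|T_S\|^2=\sup_{\|g\|_{p'}\le1}\ \sum_{i=1}^n\xi_i\,|\widehat g(i)|^2 .
\]
Writing $\xi_i=\delta+(\xi_i-\delta)$ splits the right side into a deterministic ``mean'' term $\delta\sup_{\|g\|_{p'}\le1}\|\widehat g\|_2^2$ and a centered fluctuation $\sup_{\|g\|_{p'}\le1}\sum_i(\xi_i-\delta)|\widehat g(i)|^2$, so that $\mathbb{E}_\xi\|T_S\|^2$ is at most the sum of the former and $\mathbb{E}_\xi$ of the latter. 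Crude bounds do not suffice for either: Bessel only gives $\|\widehat g\|_2\le\|g\|_2$, which is not controlled by $\|g\|_{p'}$ since $p'<2$, while $|\widehat g(i)|\le\|g\|_{p'}\|\varphi_i\|_\infty\le1$ merely yields $\|T_S\|^2\lesssim|S|\approx n^{2/p}$. The remedy is to decompose $g$ dyadically along its level sets $\{2^k<|g|\le2^{k+1}\}$: the $k$-th piece has $L^\infty$-norm $\lesssim 2^k$ and support of measure $\lesssim 2^{-kp'}$, so the spread-out (small-$k$) pieces are handled by Bessel and orthonormality, whereas the concentrated (large-$k$) pieces have automatically small Fourier coefficients $|\widehat{g_k}(i)|\lesssim 2^{k(1-p')}\to0$ because $p'>1$; interpolating these two estimates against each other is what makes the mean term $\lesssim_p 1$ once the factor $\delta$ is used.

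For the fluctuation term I would symmetrize the selectors to reach a Rademacher process --- in fact a Rademacher chaos, since the summands $|\widehat g(i)|^2$ are quadratic in $\widehat g$ --- indexed by $g$ in the unit ball of $L^{p'}$, and then bound its expected supremum by Dudley's entropy integral (or, sharply, by a generic-chaining functional) in the random metric $d_\xi(g,h)^2=\sum_i\xi_i|\widehat g(i)-\widehat h(i)|^2$. Estimating the metric entropy of the image $\{\widehat g:\|g\|_{p'}\le1\}$ in this metric --- equivalently, controlling the chaining functional --- is the heart of the matter, and it is precisely here that the hypotheses $\|\varphi_i\|_\infty\le1$ and mutual orthogonality enter in an essential way. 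I expect this to be the main obstacle: the clean covering-number bounds one would hope for are simply false, and Bourgain's resolution is an \emph{induction}, in which the desired estimate for a system of $n$ functions is deduced from the same estimate at a smaller scale (after, in effect, thinning $S$ further and absorbing a controlled loss at each stage). Setting up and running this induction, and checking that the accumulated losses destroy neither the bound $C(p)$ nor the cardinality $n^{2/p}$, is the delicate core of the argument, and is essentially the content of \cite{Bourgain89}.

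Finally, I would note the contrast with the classical situation: when the $\varphi_i$ are characters $\chi_i$ one can instead estimate an even moment $\|\sum_{i\in S}a_i\chi_i\|_{2k}^{2k}$ by counting the additive $2k$-tuples inside $S$ (Rudin's method, \cite{Rudin60}), which for a random $S$ of the right density produces a $\Lambda(2k)$-set directly and, after interpolation with the trivial $L^2$ bound, a $\Lambda(p)$-set for $p<2k$. That route is available neither up to the endpoint $p=2k$ for the even integers themselves nor for general bounded orthonormal systems, which is exactly why the metric-entropy machinery above is required.
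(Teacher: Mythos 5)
The paper does not prove this statement: it is quoted as Bourgain's theorem from \cite{Bourgain89}, and the paper's own contribution builds on Talagrand's abstract machinery for a different (Orlicz, $p=2$) setting rather than reproving Theorem \ref{BourgainMain}. Your proposal is an accurate roadmap of Bourgain's original argument --- random selectors with $\delta=n^{2/p-1}$, Markov plus Chernoff to intersect the norm and cardinality events, passage to the adjoint so that $\|T_S\|^2=\sup_{\|g\|_{p'}\le1}\sum_{i\in S}|\widehat g(i)|^2$, the split $\xi_i=\delta+(\xi_i-\delta)$, the dyadic level-set decomposition of $g$ trading the Bessel estimate against the $L^1$ bound $|\widehat{g_k}(i)|\lesssim 2^{k(1-p')}$ (the crossover at $2^{kp'}\sim n$ is exactly what makes $\delta\,\|\widehat g\|_2^2\lesssim_p 1$), and symmetrization plus chaining for the selector chaos. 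Each of these steps is correct as far as it goes.

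As a proof, however, it has a gap precisely where you acknowledge one: the bound on the expected supremum of the centered quadratic process $\sup_{\|g\|_{p'}\le1}\sum_i(\xi_i-\delta)|\widehat g(i)|^2$ carries essentially all of the difficulty, and you defer it wholesale to \cite{Bourgain89}. A naive Dudley entropy integral over the class $\{(\widehat g(i))_i:\|g\|_{p'}\le1\}$ does not converge at small scales, which is exactly why Bourgain needs his iteration/decoupling scheme and Talagrand needs majorizing measures (\cite{Talagrand95}); your sketch does not set up the induction concretely enough to verify that the accumulated losses stay at $C(p)$ rather than growing with $n$, nor that the thinning of $S$ performed along the way preserves $|S|\ge n^{2/p}$. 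So the proposal should be read as a correct plan of attack whose decisive step is cited rather than proved --- which, to be fair, is also how the paper itself treats this theorem.
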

Bourgain's proof of Theorem \ref{BourgainMain} is extremely sophisticated, using a clever combination of results built-up from scratch in decoupling theory, bounds on metric entropy, dyadic decomposition, and the probabilistic method (see \cite{Tao20}).  As groundbreaking and transparent as Bourgain's proof of Theorem $\ref{BourgainMain}$ is, the proof relies on the properties of the function $\phi(z)=z^p$ and is broken up into the cases $2 < p \leq 3$, $3 < p \leq 4$, and $p >4$.  In \cite{Talagrand95}, Talagrand presents a more conceptual proof of Theorem \ref{BourgainMain}.  Talagrand's proof relies on an abstract definition called $2$-smoothness of norm and notes that $L^p$ is $2$-smooth.  This more general framework eliminated the case by case analysis described above and additionally proves the result in $L^{p,1}$ spaces, something which cannot be directly proved using Bourgain's original approach.  In his textbook, \cite{Talagrand21}, Talagrand has since developed a simpler conceptual framework than in \cite{Talagrand95} using the notion of $2$-convexity.  The $\Lambda(p)$ problem is the main application of the latter conceptual framework. In this work, our main result applies the latter abstract machinery to solve the problem. 
\vskip 0.125in
This work gives analogues to Theorem \ref{BourgainMain} in Orlicz spaces.  We present terminology needed to define Luxemburg norms, which are equivalent to Orlicz norms.  
\begin{definition}
Assume $\Phi: [0, \infty) \longrightarrow [0, \infty)$ has the following proprerties: $\Phi(0)=0$ and $\Phi$ is convex, increasing, and unbounded.  If $\Phi$ satisfies the latter properties, it is called a Young function.  
\end{definition}
Let $f$ be a function on a probability space, $(\Omega, P)$. We define the Luxemburg norm, $\|\cdot \|_{\Phi}$, as follows:
$$
\|f\|_{\Phi} = \inf \left\{ k > 0 : \mathbb{E}\left [\Phi \left (\frac{|f|}{k} \right) \right] \leq 1  \right\}.
$$
The space of all random variables, which are finite on $\| \cdot\|_{\Phi}$, is called an $L^{\Phi}$ space.  $\| \cdot \|_{\Phi}$ defined in this way yields a norm on $L^{\Phi}$.
\begin{definition}
A Young function, $\Phi$, is called a nice Young function if
$$
\lim_{u \rightarrow 0} \frac{\Phi(u)}{u}=0 \text{ and }
\lim_{u \rightarrow \infty} \frac{\Phi(u)}{u} = \infty.
$$
\end{definition}
\vskip 0.125in
Let $\mathcal{C}$ be the class of nice Young functions satisfying the assumptions of Theorem 1.5 in \cite{Ryou22}.  Since we only care about Zygmund spaces, a special case of $\mathcal{C}$ in Theorem \ref{Zygmund} below, we do not list the specific assumptions $\mathcal{C}$ satisfies as the assumption list is long and complicated.  On the probability space $[0,1]$ with Lebesgue measure, if $\Phi \in \mathcal {C}$, it was shown in Theorem 1.5 of \cite{Ryou22}, there exists $J \subset [n]$ such that $|J| \geq \Phi^{-1}(n)^2$ and
\begin{align}
\label{DongeunTheorem}
\|f\|_{\Phi} \leq K \|a_f\|_2
\end{align}
for all $f \in \text{span}( \chi_{t})_{t \in J}$, where $K$ is a constant essentially depending on $\Phi$.  It should be noted that $\Phi(u) = u^p$ is in $\mathcal{C}$.  Thus Theorem \ref{BourgainMain} is recovered.
\vskip.125in 
Important to this work, we have the special case of equation \ref{DongeunTheorem} where we define the nice Young function in $\mathcal{C}$ as
\[ \Phi_{p, \alpha}(u) = \begin{cases} 
      cu^2 & u\leq u_0 \\
      u^p \log^{\alpha p}(u) & u > u_0 
   \end{cases},
\]
where $p > 2$ and $\alpha > 0$, see example 1.10 in \cite{Ryou22}.  Note that $c$ and $u_0 \geq 1$ can be chosen so that $\Phi_\alpha$ is a nice Young function.  Thus, from \cite{Ryou22}, we obtain the next result.
\begin{theorem}
\label{Zygmund}
Let $p>2$ and $\alpha > 0$.  There exists $J \subset [n]$ such that $|J| \geq 
\frac{n^{2/p}}{\log^{\alpha}(n)}$ and 
$$
\|f\|_{\Phi_{p,\alpha}} \leq C(p, \alpha) \cdot \|a_f\|_2
$$
for all $f \in \text{span}( \chi_t)_{t \in J}$, where $C(p, \alpha)$ is constant depending only on $\alpha$ and $p$.  
\end{theorem}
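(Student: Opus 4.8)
The plan is to deduce the theorem from \ref{DongeunTheorem} (Theorem~1.5 of \cite{Ryou22}), applied to the Young function $\Phi_{p,\alpha}$, together with an elementary lower bound on $\Phi_{p,\alpha}^{-1}(n)$. Two things must be verified: \textbf{(a)} that $\Phi_{p,\alpha}$ genuinely lies in the class $\mathcal{C}$, with the auxiliary constants $c$ and $u_0$ chosen as functions of $p$ and $\alpha$ only; and \textbf{(b)} that $\Phi_{p,\alpha}^{-1}(n)^2\ge n^{2/p}/\log^{\alpha}(n)$ once $n$ is large in terms of $p$ and $\alpha$. Granting (a), \ref{DongeunTheorem} produces a set $J\subset[n]$ with $|J|\ge\Phi_{p,\alpha}^{-1}(n)^2$ and $\|f\|_{\Phi_{p,\alpha}}\le K\|a_f\|_2$ for all $f\in\text{span}(\chi_t)_{t\in J}$, where $K$ depends only on $\Phi_{p,\alpha}$; since $\Phi_{p,\alpha}$ is determined by $p$ and $\alpha$, we may take $K=C(p,\alpha)$, and (b) then upgrades $|J|\ge\Phi_{p,\alpha}^{-1}(n)^2$ to $|J|\ge n^{2/p}/\log^{\alpha}(n)$, which is the asserted conclusion. (Here $n$ is tacitly taken large, as it must be for the statement to make sense --- the right-hand side of the size bound is already infinite at $n=1$; the finitely many remaining small $n$ are harmless.)

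For (a): $\Phi_{p,\alpha}(0)=0$; on $(u_0,\infty)$ the branch $u^{p}\log^{\alpha}(u)$ is smooth, strictly increasing to $\infty$, and convex provided $u_0$ is taken large enough, while on $[0,u_0]$ the branch $cu^{2}$ is convex and increasing. Choosing $u_0$ large and then $c$ so that the two branches agree in value (and, if one likes, in first derivative) at $u_0$ makes $\Phi_{p,\alpha}$ convex, increasing, unbounded, and zero at the origin, i.e.\ a Young function; moreover $\Phi_{p,\alpha}(u)/u\to0$ as $u\to0$ and $\Phi_{p,\alpha}(u)/u\to\infty$ as $u\to\infty$, so $\Phi_{p,\alpha}$ is a \emph{nice} Young function. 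That it also satisfies the remaining hypotheses of Theorem~1.5 of \cite{Ryou22}, i.e.\ that $\Phi_{p,\alpha}\in\mathcal{C}$, is exactly Example~1.10 of \cite{Ryou22}. Since $c$, $u_0$, and all implied constants depend only on $p$ and $\alpha$, \ref{DongeunTheorem} applies with its constant equal to $C(p,\alpha)$.

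For (b): fix such an $n$ and put $u=\Phi_{p,\alpha}^{-1}(n)$, so $u>u_0$ and $u^{p}\log^{\alpha}(u)=n$; extracting a $p$-th root rewrites this as $u\,\log^{\alpha/p}(u)=n^{1/p}$. Arranging $u_0\ge e$, we have $\log^{\alpha/p}(u)\ge1$, hence $u\le n^{1/p}$, and therefore $\log u\le\tfrac1p\log n$; consequently
$$
\Phi_{p,\alpha}^{-1}(n)=\frac{n^{1/p}}{\log^{\alpha/p}(u)}\ \ge\ \frac{n^{1/p}}{\bigl(\tfrac1p\log n\bigr)^{\alpha/p}}\ =\ p^{\alpha/p}\,\frac{n^{1/p}}{\log^{\alpha/p}(n)}\,.
$$
Squaring and using $p^{2\alpha/p}\ge1$ gives $\Phi_{p,\alpha}^{-1}(n)^2\ge n^{2/p}/\log^{2\alpha/p}(n)$, and since $p>2$ we have $2\alpha/p<\alpha$, so $\log^{2\alpha/p}(n)\le\log^{\alpha}(n)$ for $n\ge3$; hence $\Phi_{p,\alpha}^{-1}(n)^2\ge n^{2/p}/\log^{\alpha}(n)$, which is (b). There is no real obstacle here: once \ref{DongeunTheorem} is invoked, everything reduces to this elementary inequality, and even the crudest control of the logarithmic factor in $\Phi_{p,\alpha}^{-1}(n)$ suffices because $p>2$ leaves room to spare (a short bootstrap using the exact relation $\log u=\tfrac1p\log n-\tfrac{\alpha}{p}\log\log u$ would in fact deliver the sharper $n^{2/p}/\log^{2\alpha/p}(n)$, but this is not needed). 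The one place calling for care is simply confirming that $\Phi_{p,\alpha}$ meets all of Ryou's hypotheses, i.e.\ step (a), which is Example~1.10 of \cite{Ryou22}.
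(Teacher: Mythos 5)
Your approach is the same as the paper's: the paper offers no written proof of Theorem \ref{Zygmund}, presenting it as an immediate corollary of Ryou's Theorem 1.5 (displayed as \ref{DongeunTheorem}) via Example 1.10 of \cite{Ryou22}, which is exactly your steps (a) and (b). Your verification of (a) and your inversion argument in (b) are correct \emph{for the Young function whose large-$u$ branch is $u^{p}\log^{\alpha}(u)$}.

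The one substantive issue is that this is not the function the paper defines: $\Phi_{p,\alpha}(u)=u^{p}\log^{\alpha p}(u)$ for $u>u_0$, with the logarithm raised to the power $\alpha p$, not $\alpha$. In step (b) you write $u^{p}\log^{\alpha}(u)=n$, which silently changes the definition. With the paper's literal $\Phi_{p,\alpha}$, setting $\Phi_{p,\alpha}(u)=n$ gives $u\log^{\alpha}(u)=n^{1/p}$, hence $u\le n^{1/p}$, $\log u\le \tfrac1p\log n$, and $\Phi_{p,\alpha}^{-1}(n)\ge p^{\alpha}\,n^{1/p}/\log^{\alpha}(n)$, so that $\Phi_{p,\alpha}^{-1}(n)^{2}\ge p^{2\alpha}\,n^{2/p}/\log^{2\alpha}(n)$. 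Since $2\alpha>\alpha$, this is \emph{weaker} than the claimed $|J|\ge n^{2/p}/\log^{\alpha}(n)$ for large $n$, and no choice of constants rescues it; so under the paper's stated definition, step (b) fails and the theorem's size bound does not follow from \ref{DongeunTheorem}. Under your reading ($\log^{\alpha}$ in the definition) everything you wrote is right, and you in fact obtain the sharper $n^{2/p}/\log^{2\alpha/p}(n)$, which matches the abstract's description of Ryou's result; so the mismatch almost certainly originates in the paper's definition of $\Phi_{p,\alpha}$ rather than in your argument. Still, a proof of the statement as literally given must either use $\log^{\alpha p}$ throughout (and then prove only $|J|\gtrsim n^{2/p}/\log^{2\alpha}(n)$) or begin by flagging and correcting the exponent; you should make that choice explicit rather than substituting the convenient normalization without comment.
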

$L^{\Phi_{p, \alpha}}$ is known as a $L^{p}(\log L)^{p\alpha}$ space or a Zygmund space.  For functions on $\left (\mathbb{Z}/N\mathbb{Z} \right)^d$, it is mentioned in \cite{Iosevich25} that the result of Theorem \ref{Zygmund} would be particularly useful for $p=2$ in applications to signal recovery (see also \cite{BIMN2025, Iosevich24}), which the authors left as an open problem.  Our main result is a proof of the $p=2$ case, a case not covered in \cite{Ryou22}.
\vskip 0.125in
Bourgain originally proved Theorem \ref{TalMain}, below, in an unpublished manuscript, and then Talagrand gave a more conceptual proof of it in \cite{Talagrand98}.  Talagrand's proof uses a similar conceptual framework as in \cite{Talagrand95}, replacing the abstract definition of $2$-smoothness with a 'local' generalization called condition $H(C)$.  
\begin{theorem}
\label{TalMain}
Let $(\varphi_i)_{i=1}^n$ be a sequence of orthogonal functions on a probability space, $(\Omega, P)$, such that $\|\varphi_i\|_{\infty} \leq 1$ and let $\lambda = \inf_{1 \leq i \leq n} \|\varphi_i\|_2$.  Then there is a constant, $c_{\lambda} > 0$, such that for most subsets $I$ of $[n]$, with cardinality $\leq c_\lambda n$, we have for each number $(a_i)_{i \in I}$ that
$$
\left \|\sum_{i \in I} a_i \varphi_i \right \|_1 
\geq\frac{\lambda^2}{K\sqrt{\log n \log \log n}}
\left (\sum_{i \in I} a_i^2 \right)^{1/2}.
$$
\end{theorem}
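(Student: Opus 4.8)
The plan is to realize $I$ through independent selectors, pass to a dual formulation in which the claim becomes an \emph{inradius lower bound for a random convex body}, and then establish that bound by a generic chaining (partition‑scheme) argument built on Talagrand's local condition $H(C)$ from \cite{Talagrand98}. First I would fix a small $\delta\le c_\lambda$, take $(\delta_i)_{i=1}^n$ i.i.d.\ $\mathrm{Bernoulli}(\delta)$, and set $I=\{i:\delta_i=1\}$; a Chernoff bound makes $|I|\le c_\lambda n$ with probability close to $1$, so it suffices to obtain, with probability close to $1$, the stated inequality for all $(a_i)_{i\in I}$. Replacing $\varphi_i$ by $\varphi_i/\|\varphi_i\|_2$ one may assume the $\varphi_i$ orthonormal, at the cost of weakening the uniform bound to $\|\varphi_i\|_\infty\le\lambda^{-1}$; since the final constant depends polynomially on the $L^\infty$–bound, this is exactly what produces the factor $\lambda^2$ (and the $\lambda$–dependence of $c_\lambda$) at the end, and I would carry it separately. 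By Parseval $\|\sum_{i\in I}a_i\varphi_i\|_2=\|a\|_2$, so the target reduces to: for most $I$,
\[
\Big\|\sum_{i\in I}a_i\varphi_i\Big\|_1 \ \ge\ \frac{\|a\|_2}{K\sqrt{\log n\,\log\log n}}\qquad\text{for all }a\in\mathbb{C}^I .
\]

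Next I would dualise. By $L^1$--$L^\infty$ duality the displayed bound is equivalent to the statement that the random symmetric convex body $S_I:=\{(\langle g,\varphi_i\rangle)_{i\in I}:\|g\|_\infty\le1\}\subset\ell^2(I)$ contains the Euclidean ball of radius $r_n:=(K\sqrt{\log n\,\log\log n})^{-1}$ --- equivalently, one must exhibit, for every unit vector $a\in\ell^2(I)$, a witness $g$ with $\|g\|_\infty\le1$ and $\mathrm{Re}\sum_{i\in I}a_i\overline{\langle g,\varphi_i\rangle}\ge r_n$ (for $f=\sum_{i\in I}a_i\varphi_i$ the choice $g=\bar f/|f|$ realises $\|f\|_1$; the point is to do it uniformly in $a$). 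Two remarks explain why this is hard. On the full set $[n]$ it is simply false: the renormalised partial‑sum kernel $\sum_{t=1}^n\chi_t$ makes $\|f\|_1/\|a\|_2$ as small as $(\log n)/\sqrt n$. And interpolating the identity $\|f\|_2=\|a\|_2$ against the $L^p$ bound of Theorem \ref{BourgainMain} gives the inequality \emph{with no logarithmic loss} --- but only on a subset of size $n^{2/p}$, not on a proportional one. So one must exploit a genuine property of a random linear‑size $I$: such an $I$ contains few families of indices along which the $\varphi_i$ can reinforce one another, which is precisely what prevents $\sum_{i\in I}a_i\varphi_i$ from ever being too peaked --- a peaked $f$ being the only obstruction to $\|f\|_1\gtrsim\|f\|_2$.

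The core is the inradius bound, which I would prove by a generic chaining argument over the unit sphere of $\ell^2(I)$, following \cite{Talagrand98}. The functional $N(a):=\|\sum_i a_i\varphi_i\|_1$ satisfies $N(a)\le\|a\|_2$ trivially; the reverse bound, up to the loss we are quantifying, has to be assembled scale by scale. Because $N$ is not $2$‑smooth, the $2$‑smoothness / $2$‑convexity mechanism that governs the $\Lambda(p)$ problem (and the main theorem of this paper) is unavailable, and one instead verifies that $N$ obeys Talagrand's \emph{local} one‑scale substitute, condition $H(C)$, down to a critical scale of order $(\log n)^{-1}$; the verification is where $\|\varphi_i\|_\infty\le\lambda^{-1}$ enters, through the bound on $\|\sum a_i\varphi_i\|_\infty$ and hence on the effect of small perturbations of $a$. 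Feeding condition $H(C)$ together with the metric‑entropy estimates for the Euclidean ball in dimension $n$ into the increment bound of the partition scheme, and then optimising $\delta$, the truncation levels, and the number of scales, should yield with probability close to $1$ the inradius $r_n\gtrsim(\log n\,\log\log n)^{-1/2}$ (up to a power of $\lambda^{-1}$): the $\log n$ is the entropy contribution of the sphere in $n$ dimensions, while the extra $\log\log n$ comes from the interplay between the many chaining scales and the fact that $H(C)$ holds only locally, so that it must be re‑invoked across the scales before it expires.

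Intersecting the Chernoff event with the inradius event, and restoring the powers of $\lambda$, then gives the theorem for most $I$. The single genuinely difficult step is the core estimate: establishing the local condition $H(C)$ for the $L^1$–functional with the right critical scale, and arranging the partition scheme so that iterating this local condition over the scales costs only a further $\log\log n$ rather than another full $\log n$ --- a crude single application of an entropy estimate loses a whole $\log n$ and would replace $\sqrt{\log n\,\log\log n}$ by $\log n$. The remaining ingredients --- selectors, duality, Chernoff, Parseval, and the $\lambda$–bookkeeping --- are routine.
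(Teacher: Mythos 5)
This statement is not proved in the paper at all: it is quoted as background (Bourgain's unpublished result, with Talagrand's conceptual proof in \cite{Talagrand98}), so there is no in-paper argument to compare yours against. What you have written is a faithful high-level description of the architecture of Talagrand's proof --- Bernoulli selectors, the $L^1$--$L^\infty$ dualisation to an inradius bound for the random body $\{(\langle g,\varphi_i\rangle)_{i\in I}:\|g\|_\infty\le 1\}$, and a partition-scheme/chaining argument driven by the local condition $H(C)$ --- and your sanity checks (the Dirichlet-kernel counterexample on the full set, the Parseval reduction, the $\lambda$-bookkeeping) are all correct.

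However, as a proof it has a genuine gap, and you say so yourself: the entire technical content is concentrated in the step you defer. Verifying that the functional $N(a)=\|\sum_i a_i\varphi_i\|_1$ satisfies condition $H(C)$ at the right scale, and organising the partition scheme so that iterating the local condition across scales costs only an extra $\log\log n$ rather than a second factor of $\log n$, is not a routine invocation of a black box --- it is the theorem. In particular, your heuristic for where $\sqrt{\log n\,\log\log n}$ comes from ("entropy of the sphere" times "re-invoking $H(C)$ across scales") is plausible but is not backed by any estimate; a naive entropy argument, as you note, gives only $\log n$, and nothing in your sketch shows how the improvement is actually achieved. One smaller point to tighten: your dualisation requires $S_I$ to be closed (or you should work with the closure) before concluding that a support-function lower bound implies ball containment, and in the complex setting the body is balanced rather than merely symmetric; these are minor but should be stated. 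As it stands the proposal is a correct roadmap to the literature's proof, not a proof.
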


\vskip 0.125in
Similar results to Theorem \ref{TalMain} were proved in \cite{Guedon07} and \cite{Guedon08}, both using conceptual frameworks similar to those in \cite{Talagrand98}. The main intermediate result and application of the abstract machinery in \cite{Guedon08} is below.
\begin{proposition}
\label{GuedonResult}
There exists a universal constant, $K > 0$, such that the following holds. Suppose $q \in (1,2)$ and $(\varphi_i)_{i=1}^n$ is an orthonormal system on a probability space, $(\Omega, P)$, uniformly bounded by $L$.  Then there exists $I \subset [n]$ such that $|I| \geq n-k$ and for every $f \in span(\varphi_i)_{i \in I}$,
$$
\|f\|_2 \leq \frac{K}{(q-1)^{5/2}} L \sqrt{n/k} \sqrt{\log k}\|f\|_q.
$$
\end{proposition}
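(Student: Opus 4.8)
The plan is to dualize the inequality against $L^q$, thereby reducing it to a $\Lambda(q')$-type bound on a subset retaining all but $k$ of the coordinates, and then to establish that bound by the random-deletion (selector) method combined with generic chaining; an alternative is to feed the pair $(L^2,L^{q'})$ into Talagrand's abstract $2$-convexity / condition $H(C)$ machinery from \cite{Talagrand98,Talagrand21}. For the first step, let $q'=q/(q-1)\in(2,\infty)$ be the conjugate exponent, so that $q'-1=1/(q-1)$ and the asserted constant equals $K(q'-1)^{5/2}L\sqrt{n/k}\sqrt{\log k}$. For bounded $f$ (our functions lie in $L^\infty$), Hölder's inequality with exponents $q,q'$ gives $\|f\|_2^2=\int|f|\cdot|f|\le\|f\|_q\|f\|_{q'}$, so it suffices to find $I\subset[n]$ with $|I|\ge n-k$ such that
\[
\|f\|_{q'}\ \le\ K(q'-1)^{5/2}\,L\,\sqrt{n/k}\,\sqrt{\log k}\,\|f\|_2\qquad\text{for all }f\in\mathrm{span}(\varphi_i)_{i\in I};
\]
dividing $\|f\|_2^2\le\|f\|_q\|f\|_{q'}$ by $\|f\|_2$ then returns the claimed estimate with the same constant.

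To produce such an $I$, set $\delta=\tfrac{k}{2n}$, let $(\xi_i)_{i=1}^n$ be i.i.d.\ with $\mathbb P(\xi_i=0)=\delta$, and put $I=\{i:\xi_i=1\}$. A Bernstein/Chernoff estimate gives $|I|\ge n-k$ with probability $>3/4$, so it is enough to show that, with probability $>3/4$, every $a\in B_2^n$ satisfies $\|\sum_i a_i\xi_i\varphi_i\|_{q'}\le M(\sum_i a_i^2\xi_i)^{1/2}$; for this it suffices to bound $\mathbb E_\xi\sup_{a\in B_2^n}\|\sum_i a_i\xi_i\varphi_i\|_{q'}$ by a constant multiple of $M$ and to control the fluctuation of this supremum by Talagrand's concentration inequality for empirical processes.

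To estimate $\mathbb E_\xi\sup_{a\in B_2^n}\|\sum_i a_i\xi_i\varphi_i\|_{q'}$ I would write the $L^{q'}$ norm as a supremum of linear functionals over the unit ball of $L^{q}$ (equivalently, work with $(\mathbb E_\omega|\cdot|^{q'})^{1/q'}$ and symmetrize in the $\omega$-variable), apply the contraction principle, and estimate the resulting Rademacher process indexed by $a\in B_2^n$ via Talagrand's generic chaining / majorizing-measure bound. After the $\xi$-reweighting the metric the process induces on $B_2^n$ is comparable to the Euclidean metric rescaled by the local $\ell_\infty$ size of the $\varphi_i$, whose metric entropy is controlled using orthonormality together with $\|\varphi_i\|_\infty\le L$; the truncated Dudley integral over a net of polynomial-in-$k$ cardinality produces the factor $\sqrt{\log k}$, while the deletion probability $\delta\asymp k/n$ caps the relevant scale at order $L\sqrt{n/k}$ — the Bourgain–Tzafriri restricted-invertibility mechanism. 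Keeping track of the $L^{q'}$ moment comparisons used along the way, each Rosenthal-type step costing a power of $q'$, accounts for the factor $(q'-1)^{5/2}$.

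The main obstacle is the last step carried out with the correct quantitative dependence. Two points need care. First, one cannot afford a union bound over the $\binom nk$ possible deleted sets — that would cost $\log\binom nk\sim k\log(n/k)$ rather than $\sqrt{\log k}$ — so the chaining, or equivalently a peeling argument over dyadic ranges of the coefficients $a_i$, is essential to ensure that deleting merely a $k/n$-fraction of the coordinates genuinely annihilates the ``peak'' of each $f$. Second, obtaining exactly the exponent $5/2$ in $q'$ (rather than some other explicit power) is delicate: it emerges only after combining the interpolation of the first step, the symmetrization/contraction, and the moment constants of $L^{q'}$, and verifying it requires keeping every constant explicit throughout. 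If instead one invokes the abstract framework of \cite{Talagrand98,Talagrand21}, the obstacle becomes verifying that condition $H(C)$ (or the relevant $2$-convexity inequality) holds for the pair $(L^2,L^{q'})$ with a constant $C$ of order $(q'-1)^{5/2}$.
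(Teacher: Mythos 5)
First, a point of reference: the paper does not prove Proposition~\ref{GuedonResult} at all --- it is quoted verbatim from \cite{Guedon08} as background --- so there is no in-paper proof to compare against. Judged on its own terms, your plan has a fatal gap at the very first reduction. You use H\"older to write $\|f\|_2^2\le\|f\|_q\|f\|_{q'}$ and conclude that it suffices to exhibit $I$ with $|I|\ge n-k$ and $\|f\|_{q'}\le M\|f\|_2$ on $\mathrm{span}(\varphi_i)_{i\in I}$, with $M=K(q'-1)^{5/2}L\sqrt{n/k}\sqrt{\log k}$. That sufficient condition is simply false in the relevant range of $k$, so the rest of the plan (selectors, chaining, concentration) is aimed at an unprovable target. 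Concretely, take $\varphi_j(x)=e^{2\pi i jx}$ on $[0,1]$, $q=3/2$, $q'=3$, $L=1$, and $k=n^{0.7}$. For \emph{any} $I\subset[n]$ with $|I|=n-k$ and $f=\sum_{j\in I}\varphi_j$ one has $\|f\|_3\ge\|D_n\|_3-\|\sum_{j\notin I}\varphi_j\|_3\ge cn^{2/3}-k^{2/3}\ge \tfrac c2 n^{2/3}$ (using $\|\sum_{j\in T}\varphi_j\|_3^3\le\|\cdot\|_\infty\|\cdot\|_2^2\le k^2$), while $\|f\|_2=\sqrt{n-k}$; hence $\|f\|_3/\|f\|_2\ge \tfrac c2 n^{1/6}$, which exceeds $K\sqrt{n/k}\sqrt{\log k}\asymp n^{0.15}\sqrt{\log n}$ for large $n$. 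So no subset of size $n-k$ --- random or otherwise --- satisfies your intermediate $\Lambda(q')$ bound, even though the proposition itself is true. The loss occurs precisely in the H\"older step: a reverse inequality $\|f\|_2\lesssim M\|f\|_q$ is strictly weaker than the forward inequality $\|f\|_{q'}\lesssim M\|f\|_2$, and for proportional subsets only the former is attainable.

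The repair is the one you mention only as an afterthought, and it is not optional: one must work with a norm much weaker than $L^{q'}$. In the Gu\'edon--Mendelson--Pajor--Tomczak-Jaegermann / Talagrand approach (and in this paper's own Theorem~\ref{main}), the random-selector and chaining machinery is used to bound $\mathbb E\|U_J\|_C$ for the norm $\|\cdot\|_C$ of equation~\ref{eq:C}, whose content (Lemma~\ref{hahnbanach}) is only that each $f$ in the span with $\|a_f\|_2\le1$ lies in the convex hull of a ball of radius $M$ in a smooth ($L^{p_1}$ or $2$-convex) norm and the set of combinations with $\ell_2$-small coefficients. The lower bound on $\|f\|_q$ (equivalently the reverse $L^2$--$L^q$ inequality) is then extracted from this decomposition by a separate duality argument, e.g.\ $\|f\|_2^2=\langle f,t_1w_1+t_2w_2\rangle$ with $\langle f,w_1\rangle\le\|f\|_q\|w_1\|_{q'}$ and $\langle f,w_2\rangle\le\|a_f\|_2\|b\|_2$, the second term being absorbable because $\|b\|_2$ is small. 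Your plan never performs this decomposition, and without it the claimed reduction does not go through; the issues you flag (avoiding the union bound over deleted sets, tracking the exponent $5/2$) are real but secondary to this structural problem.
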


\vskip 0.125in

Let $\alpha > 1/2$.  Define the Orlicz function, see \cite{Kashin20, Limonova23}, as
$$
G_{\alpha}(u) = u^2 \frac{\ln^{\alpha}(e+|u|)}{\ln^{\alpha}(e +1/|u|)}.
$$

The following Theorem, Theorem 2 of \cite{Limonova23}, is below, where the orthogonal functions are assumed to be bounded in $p$ norm by a constant for a fixed $p > 2$.  We improve the constants in Theorem \ref{RussiaResult} significantly if the orthogonal functions are uniformly bounded by a constant.  In addition, our proof is much shorter and simpler (see Theorem \ref{main}).
\begin{theorem}
\label{RussiaResult}
Let $\alpha > 3/2$, $\rho > 2$, and $p > 2$.  Consider mutually orthogonal functions, $(\varphi_i)_{i=1}^n$, such that $\| \varphi_i \|_p \leq 1$ for all $i \in [n]$.  Set $\beta = \max(\alpha/2-\rho/4,1/4)$.  Let $I \subset [n]$ be a random set such that the probability of choosing an index, $i$, is $\log^{-\rho}(n+3)$ for each $i \in [n]$.  Then with high probability, for a set $I \subset [n]$, the following holds for all $f \in \text{span}(\varphi_i)_{i \in I}$:
\begin{equation}
\label{eq:Limonova}
\|f\|_{G_{\alpha}} \leq C(\alpha, p, \rho) \log^{\beta+\frac{1}{2}}(n+3)\|a_f\|_2,
\end{equation}
where  $C(\alpha, p, \rho)$ is a constant depending only on $\alpha, p$, and $\rho$.
\end{theorem}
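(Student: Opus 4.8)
The plan is to run, inside the Orlicz space $L^{G_\alpha}$, the abstract selector-and-chaining machinery that Talagrand set up for the $\Lambda(p)$ problem (the same circle of ideas behind Theorems \ref{BourgainMain} and \ref{TalMain}). Writing $\delta_i = \mathbf{1}_{\{i \in I\}}$ for the independent Bernoulli variables with $\mathbb{E}\delta_i = p_n := \log^{-\rho}(n+3)$, the estimate \eqref{eq:Limonova} is exactly an upper bound on the operator norm of $T_I : \ell^2(I) \to L^{G_\alpha}(\Omega)$, $T_I a = \sum_i \delta_i a_i \varphi_i$, equivalently on the random variable $U := \sup_{\|a\|_2 \le 1}\bigl\|\sum_{i=1}^n \delta_i a_i \varphi_i\bigr\|_{G_\alpha}$. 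The structural input that replaces ``$L^p$ is $2$-smooth'' from \cite{Talagrand95} (or condition $H(C)$ from \cite{Talagrand98}) is that $L^{G_\alpha}$ is a $2$-convex Banach lattice: since $G_\alpha(u) = \psi(u^2)$ for a convex $\psi$, its $2$-convexity constant $M(\alpha)$ depends only on $\alpha$, and $L^{G_\alpha}$ is squeezed between $L^2$ and $L^{2+\varepsilon}$ (for every $\varepsilon > 0$) in a way quantified by $\alpha$. With this in hand the slick route is to invoke the selector theorem for $2$-convex spaces from \cite{Talagrand21}; I indicate below the estimate it rests on and where the exponent $\beta$ comes from.

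The core step is a bound on $\mathbb{E} U$ by a generic-chaining argument after the usual dyadic reduction: write $a = \sum_k a^{(k)}$ with $a^{(k)}$ supported on $\{i : 2^{-k-1} < |a_i| \le 2^{-k}\}$, so that after discarding a tail of very small coordinates (handled directly by orthogonality) only $O(\log n)$ essentially flat blocks $\sigma_k\,|E_k|^{-1/2}\mathbf{1}_{E_k}$ remain, with $\sum_k \sigma_k^2 \le 1$. For one flat block, $\sum_i \delta_i a^{(k)}_i \varphi_i = \sigma_k\,|E_k|^{-1/2}\sum_{i \in E_k \cap I}\varphi_i$, whose $G_\alpha$-norm is controlled in two pieces: the quadratic part of $G_\alpha$ by orthogonality together with $\|\varphi_i\|_2 \le 1$, and the super-quadratic part, where the $\ln^\alpha$ weight acts, by $\|\varphi_i\|_p \le 1$ via H\"older. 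Taking the supremum over subsets $E_k$ of each cardinality is a metric-entropy computation — a Dudley-type integral in the $L^{G_\alpha}$ metric — in which the $2$-convexity of $L^{G_\alpha}$ upgrades the trivial union bound to the self-improving chaining estimate. Summing over the $O(\log n)$ dyadic levels and optimizing, the integral contributes a power $\log^{\alpha/2}(n+3)$ from the Orlicz exponent against a floor $\log^{1/4}(n+3)$, the irreducible cost of the chaining after self-improvement, while the sparsity $p_n$ shrinks the effective dimension and pulls out a factor $\log^{-\rho/4}(n+3)$; the net exponent is $\beta = \max(\alpha/2 - \rho/4,\,1/4)$.

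Upgrading ``$\mathbb{E}U$ is small'' to ``$U$ is small with high probability'' is a concentration step: $U$ is a supremum of a Bernoulli empirical process, so Talagrand's concentration inequality for such suprema (a plain bounded-differences bound is too weak here, since $\mathbb{E}U \sim \log^\beta n \ll \sqrt{n}$) keeps $U$ within a constant multiple of its mean off an event of probability at most $n^{-10}$, once one has controlled $\sup_i \|\varphi_i\|_{G_\alpha} \le C(\alpha,p)\|\varphi_i\|_p$ — which holds because $G_\alpha(u) \lesssim_\alpha |u|^p$ for $|u|\ge 1$ and $G_\alpha(u)\le |u|^2$ for $|u|\le 1$, so $L^p \hookrightarrow L^{G_\alpha}$; here the $L^p$ hypothesis, rather than an $L^\infty$ one, is all that is available. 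Finally $|I| = \sum_i \delta_i$ concentrates around $np_n = n\log^{-\rho}(n+3)$ by a Chernoff bound, so the cardinality clause $|I| \ge n-k$ fails only on an exponentially small event, and a union bound over the two bad events finishes the proof.

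The step I expect to be the main obstacle is the entropy/chaining estimate in the $L^{G_\alpha}$ norm with the correct exponents. One must quantify the $2$-convexity of $L^{G_\alpha}$ uniformly — not only the lattice constant $M(\alpha)$ but the behaviour of the modulus as the scale drifts away from the ``$L^2$ window'' on which $\ln^\alpha(e+u)/\ln^\alpha(e+1/u) \asymp 1$ — and then propagate this through the dyadic summation so that it contributes precisely the power $\alpha/2$ of $\log n$ rather than the $\alpha$ a crude bound would give. The weaker hypothesis $\|\varphi_i\|_p \le 1$ in place of $\|\varphi_i\|_\infty \le 1$ is what forces the truncation arguments above and is the same source of loss that produces the $(q-1)^{-5/2}$ and $\sqrt{\log k}$ factors in Proposition \ref{GuedonResult}; replacing it by the uniform bound removes those losses and, after the same self-improvement, collapses the $\log^\beta(n)$ factor to $\log^{\alpha}(\log n)$, which is how Theorem \ref{main} sharpens Theorem \ref{RussiaResult} in the uniformly bounded case.
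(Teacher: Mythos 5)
First, note that the paper does not prove Theorem \ref{RussiaResult} at all: it is imported verbatim as Theorem 1 of \cite{Limonova23}, and the paper's own remark states that Limonova's proof follows the Bourgain-style route of Theorem \ref{BourgainMain} (dyadic decomposition and hands-on entropy estimates) rather than the Talagrand-style abstract route you sketch. So there is no in-paper proof to compare against; your proposal must stand on its own, and it does not.

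The proposal is a strategy outline, not a proof, and the gap sits exactly where you yourself place it. The entire quantitative content of the theorem --- that the loss is $\log^{\beta}(n+3)$ with $\beta=\max(\alpha/2-\rho/4,\,1/4)$ --- is asserted by fiat: you say the Dudley-type integral ``contributes a power $\log^{\alpha/2}$,'' that chaining has an ``irreducible cost'' of $\log^{1/4}$, and that sparsity ``pulls out'' $\log^{-\rho/4}$, but none of these exponents is derived, and you then name the entropy/chaining estimate in the $L^{G_\alpha}$ metric as ``the main obstacle.'' A proof whose central estimate is flagged as an unresolved obstacle is not a proof. There is also a structural problem with the convexity input: the selector machinery of \cite{Talagrand21} (Theorem \ref{TalTech} here) requires the \emph{dual} norm to be $2$-convex in the uniform-convexity sense, i.e.\ $X$ must be $2$-smooth; lattice $2$-convexity of $L^{G_\alpha}$ (your ``$G_\alpha(u)=\psi(u^2)$ with $\psi$ convex'') is a different property and does not supply the modulus in \eqref{eq:pconvex} for $(L^{G_\alpha})^*$. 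Since $G_\alpha$ sits at the $p=2$ endpoint, the relevant smoothness constant is precisely what threatens to degenerate --- this is why the present paper's own Theorem \ref{main} routes through $E=L^{p_1}$ with $p_1>2$ and only afterwards transfers to the Orlicz norm via Lemma \ref{hahnbanach} and a truncation at a level $D$, rather than running the machinery inside the Orlicz space directly. Finally, the hypothesis here is $\|\varphi_i\|_p\le 1$ rather than $\|\varphi_i\|_\infty\le 1$, so the flat-block computation you describe (bounding $\||E_k|^{-1/2}\sum_{i\in E_k\cap I}\varphi_i\|_{G_\alpha}$ ``by H\"older'') needs an actual truncation argument to control the super-quadratic part; you acknowledge this but again do not carry it out. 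To make this a proof you would need to (i) establish the $2$-smoothness (or an adequate substitute such as condition $H(C)$ of \cite{Talagrand98}) of the relevant space with constants depending only on $\alpha$, or else work in $L^{p_1}$ and transfer as in Theorem \ref{main}; and (ii) actually perform the entropy computation and the optimization over the dyadic levels so that $\beta$ emerges from the calculation rather than being read off from the statement being proved.
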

\begin{remark}
We improve Limonova's main result, Theorem \ref{RussiaResult}, when the orthogonal functions are bounded uniformly by a constant, a common use case (see \cite{Iosevich25}).  We compare Limonova's result to our main result, Theorem \ref{main}, in the uniformly bounded setting.  First, it should be noted that in Theorem \ref{RussiaResult}, the density is $\frac{1}{\log^{\rho}(n)}$, and in our main result, we obtain results, only, in the case $\rho=\alpha+1$.  Our result has a $\log^{\frac{\alpha}{2}}(\log n)$ factor in equation \ref{eq:main} compared to a  $\log^{\beta+\frac{1}{2}}(n)$ factor in equation \ref{eq:Limonova}.  Our proof of Theorem \ref{main} is much shorter and more conceptual than the proof Theorem \ref{RussiaResult}, as we build it on top of Talagrand's abstract Theorem, Theorem \ref{TalTech}.  Conversely, the proof of Theorem \ref{RussiaResult} uses similar techniques and lower-level details of Bourgain's proof, of Theorem \ref{BourgainMain}, which leads to a longer and more complicated proof.
\end{remark}

\vskip 0.125in

It is worth mentioning the interesting result, Theorem 1 of \cite{Kashin20}, which we state below. 
\begin{theorem}
\label{thm:kashin20}
Let $\alpha > 0$ and $\rho >0$ be fixed.  Let $(\varphi_i)_{i=1}^n$ be an orthogonal system such that $\|\varphi_i\|_{\infty} \leq 1$ for all $i \in [n]$.  Let $(\zeta_i)_{i=1}^n$ be i.i.d. and $\{0,1\}$-valued.  Then with high probability, for a random set $\Lambda = \Lambda_\omega \subset [n]$ generated by $(\zeta_i(\omega))_{i=1}^n$ with $\delta = \mathbb{E} \zeta_i = \log(n+3)^{-\rho}$, for all $i \in [n]$, it follows that for all $\|a\|_{\infty} =1$ such that $\text{supp}(a) \in \Lambda$,
$$
\left \|\sum_{i \in \Lambda} a_i
\varphi_i 
\right \|_{G_{\alpha}}
\leq K(\alpha,\rho) \cdot |\Lambda|^{\frac{1}{2}}
\cdot \left (\log^{\alpha/2-\rho/4}(n + 3)
+1.
\right ),
$$
where $K(\alpha, \rho)$ is a constant depending on $\alpha$ and $\rho$.
\end{theorem}
\begin{remark}
In the case of Theorem \ref{thm:kashin20} when $\delta = \log^{-2 \alpha}(n+3)$ and $|a_i| = \frac{1}{|\Lambda|^{1/2}}$, we have that 
$$
\left \|\sum_{i \in \Lambda} a_i
\varphi_i 
\right \|_{G_{\alpha}}
\leq K(\alpha, \rho).
$$
\end{remark}
\begin{remark}
We thank the referees for asking if the techniques of our main Theorem, Theorem \ref{main}, could give a proof of Theorem \ref{thm:kashin20}.  Theorem \ref{thm:kashin20} uses techniques from \cite{Bourgain89}, so it would be desirable to have a simpler and more conceptual proof, analogous to how the proof of our main Theorem, Theorem \ref{main}, is more conceptual than the proof of Theorem \ref{RussiaResult}.  The main technical ingredient of our main Theorem, Theorem \ref{TalTech} of \cite{Talagrand21}, unfortunately only works for operators on $l_n^q$ when $1 < q \leq 2$, and Theorem \ref{thm:kashin20} would require $q= \infty$.  We leave the problem of finding a more conceptual proof of Theorem \ref{thm:kashin20}, perhaps using an abstract framework similar to those in \cite{Talagrand21}, as future work.
\end{remark}

\vskip 0.25in

\section{Results}
The main Theorem of \cite{Ryou22} proves our main result, without $\log \log n$ factors, when the Orlicz space is a $L^p(\log L)^{p\alpha}$ space for $p > 2$ (see Theorem \ref{DongeunTheorem}).  However, their work did not extend to the case where $p=2$.  The latter is an open problem introduced in \cite{Iosevich25}, which we prove up to $\log \log n$ factors as our main result.  First, we list some definitions and Theorems in general Banach spaces, which will be applied to prove our main result, Theorem \ref{main}.  Second, we prove our main result.  Finally, we give a probabilistic lower bound (subsets of $[n]$ are selected by a Bernoulli distribution over $[n]$'s indices) matching the bounds of equation \ref{eq:main} in our main result.  Hence, if our bounds can be improved, the improvement cannot use the common probabilistic technique: subset selection by a Bernoulli distribution on indices of $[n]$.  In proofs, constants can vary from line to line. 
\subsection{Definitions and Theorems in Banach Spaces}
First some definitions from \cite{Talagrand21}.  Let $X$ be a Banach space and consider $n$ linearly independent elements of $X$, $x_1,..,x_{n}$. 
 Consider the space $l_{n}^{q}$ on $\mathbb{C}^{n}$ with canonical basis $(e_i)_{i=1}^n$.  We define,
$$
U : l_{n}^q \rightarrow X
$$
to be the operator sending $e_i$ to $x_i$.  In this work, we will be interested in the case where $q=2$.  Let $p$ be the conjugate of $q$. 
 Given a number $C > 0$, we can define the norm, $\| \cdot \|_C$, on $X$ as follows (see \cite{Talagrand21} page 632).  $\| \cdot \|_C$ is induced by defining the unit ball of the dual norm to be,
\begin{align}
\label{eq:C}
X_{1,C}^{*} :=
\left \{ 
x^{*} \in X^{*} : \|x^{*}\|_{X^*}\leq 1,
\sum_{i=1}^n \left |x^{*}(x_i) \right |^p \leq C
\right \}.
\end{align}
We denote $\|U\|_C$ to be the operator norm of $U$ when $X$ is given the norm $\|\cdot\|_C$.
\vskip 0.125in
We recall the definition of $p$-convex (see page 112 of \cite{Talagrand21}).  
\begin{definition}
$X$'s norm, $\| \cdot \|$, is $p$-convex if there is a number, $\eta > 0$, such that for all $x,y \in X$, where $\|x\|, \|y\| \leq 1$, then
\begin{align}
\label{eq:pconvex}
\left \|\frac{x+y}{2} \right \| \leq 1 - \eta \|x-y\|^p.
\end{align}
\end{definition}
\begin{remark}
We note that if $X = L^p$ and $p \leq 2$, then $\| \cdot \|_p$ is $2$-convex (see \cite{Lindenstrauss79}). 
\end{remark}
\vskip 0.125in
We define a random subset, $J$, of $[n]$ by taking an i.i.d. vector of Bernoulli random variables, $(\zeta_i)_{i=1}^n$, where,
\begin{align}
\label{eq:ber}
\Pr( \zeta_i = 1) = \delta \text{; }
\Pr( \zeta_i = 0) = 1 - \delta.
\end{align}
Thus,
$$
J = \left \{ 
i \in [n] : \zeta_i =1
\right \}.
$$
We can define $U_J$ to be the operator from $l_J^q \rightarrow X$ given by $U(e_i) = x_i$ where $i \in J$.  
\vskip 0.125in
We now state a technical Lemma, which our main result is an application of (see Theorem 19.3.1, pg 632 of \cite{Talagrand21}).
\begin{theorem}
\label{TalTech}
Consider $1 <  q \leq 2$ and its conjugate exponent $p \geq 2$.  Consider a Banach space $X$ such that $X^*$ is $p$-convex with corresponding $\eta >0$.  Then there exists a constant, $K(p, \eta)$, depending only on $p$ and $\eta$ with the following property.  Consider elements, $x_1, \dots, x_n \in X$, and set $S = \max_{1 \leq i \leq n} \|x_i\|$. 
 Denote the operator $U: l_n^q \rightarrow X$ given by $U(e_i) = x_i$.  Consider a number $C > 0$ and define $B = \max(C, K(p, \eta) \cdot S^p \log n)$.  Assume that for some number, $\varepsilon > 0$,
 $$
 \delta \leq \frac{S^p}{B \varepsilon n^{\varepsilon}} \leq 1.
 $$
 Consider the i.i.d. random vector, $(\zeta_i)_{i=1}^n$, and random set, $J$, defined in equation \ref{eq:ber}.  Then,
 $$
 \mathbb{E}\|U_J\|_C^p \leq 
 K(p, \eta)\frac{S^p}{\varepsilon}.
 $$
\end{theorem}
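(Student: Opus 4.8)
The plan is to obtain Theorem~\ref{TalTech} by a direct appeal to Talagrand's selector-process estimate: the statement above is (a repackaging of) Theorem~19.3.1 of \cite{Talagrand21}, so the only work is to confirm that the hypotheses line up. I would check that the notion of $p$-convexity recalled above, from page~112 of \cite{Talagrand21}, is exactly the one under which Theorem~19.3.1 is proved --- so that assuming $X^*$ is $p$-convex with constant $\eta$ supplies precisely the required structural input; that $S=\max_i\|x_i\|$, $B=\max(C,K(p,\eta)S^p\log n)$, and the two-sided constraint $\delta\le S^p/(B\varepsilon n^\varepsilon)\le 1$ are the quantitative conditions imposed in loc.\ cit.; and that, with $U:l_n^q\to X$, $U(e_i)=x_i$, the norm $\|\cdot\|_C$ given by the dual ball \eqref{eq:C}, and the Bernoulli selector model \eqref{eq:ber}, the conclusion $\mathbb{E}\|U_J\|_C^p\le K(p,\eta)S^p/\varepsilon$ is the estimate established there. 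Granting these identifications, the theorem is immediate.

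For orientation --- since the proof of our main result, Theorem~\ref{main}, passes entirely through this statement --- the mechanism in \cite{Talagrand21} is as follows. With $p$ the conjugate exponent of $q$ one has $\|U_J\|_C^p=\sup_{x^*\in X_{1,C}^*}\sum_{i=1}^n\zeta_i|x^*(x_i)|^p$, so bounding $\mathbb{E}\|U_J\|_C^p$ amounts to controlling the supremum, over the dual ball $X_{1,C}^*$, of the selector process $x^*\mapsto\sum_i\zeta_i|x^*(x_i)|^p$. Its mean is at most $\delta C\le\delta B$ by the very definition of $X_{1,C}^*$, and the fluctuation about the mean is handled by generic chaining; the $p$-convexity of $X^*$ is the structural hypothesis that lets this chaining close, through a decomposition-type estimate controlling the relevant chaining functional of $X_{1,C}^*$ in terms of $\|U\|_C$ together with an $S^p\log n$ correction --- this correction being the source of the term $K(p,\eta)S^p\log n$ appearing in $B$. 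A final self-improvement (bootstrapping) step upgrades the crude bound so obtained into the clean form $K(p,\eta)S^p/\varepsilon$, and the two-sided constraint on $\delta$ is exactly what keeps $|J|\approx\delta n$ while leaving enough slack for the chaining sums and the bootstrap to converge.

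The only genuine obstacle is bookkeeping: matching our normalizations of $U$, of $\|\cdot\|_C$ via \eqref{eq:C}, and of the selector model \eqref{eq:ber} with Talagrand's conventions, and confirming that the constant $K(p,\eta)$ may be taken to depend on $p$ and $\eta$ alone --- in particular not on $n$, $C$, or $\delta$. No idea beyond those in \cite{Talagrand21} is required; this is by design, since the contribution of the present work is the application of Theorem~\ref{TalTech}, namely Theorem~\ref{main}, rather than a new proof of it.
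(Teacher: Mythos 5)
Your proposal is correct and matches the paper exactly: the paper offers no proof of this statement either, but simply quotes it as Theorem 19.3.1 (p.~632) of \cite{Talagrand21} and verifies the hypotheses when applying it. Your additional sketch of the selector-process/chaining mechanism behind Talagrand's proof is accurate but goes beyond what the paper itself provides.
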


\subsection{Main Result}
Throughout this section we assume that $ (\varphi_i)_{i=1}^n$ are mutually orthogonal and non-zero functions on a probability space, $(\Omega, P)$, and that $\|\varphi_i\|_{\infty} \leq 1$.  Note that $ (\varphi_i)_{i=1}^n$ are linearly independent since they are non-zero and orthogonal.  Set $E := L^{p_1}(\Omega)$ with norm $\| \cdot \| := \| \cdot\|_{p_1}$ where $p_1 > 2$ will be selected later.  Denote the dual space by $E^{*}$.  Let $q_1$ be $p_1$'s conjugate.  Hence, $\|\cdot\|^{*}$ on $E^{*}$ is $\| \cdot \|_{q_1}$ since $E^{*} = L^{q_1}(\Omega)$.  Hence, $\|\cdot\|^{*}$ is $2$-convex since $q_1 \leq 2$ in $L^{q_1}(\Omega)$ (see equation \ref{eq:pconvex}). 
\vskip 0.125in
We state a technical Lemma (see Lemma 19.3.11 of \cite{Talagrand21}).  Define the $\| \cdot \|_C$ norm on $E$ as induced in equation \ref{eq:C}.
\begin{lemma}
\label{hahnbanach}
If $f \in E$ satisfies $\|f\|_C \leq 1$, then $f \in \text{conv}(\mathcal{C}_1 \cup \mathcal{C}_2)$ where $\mathcal{C}_1
= \left\{ g \in E : \|g\| \leq 1 \right\}$ and $\mathcal{C}_2 = 
\left \{
\sum_{i=1}^n \beta_i \varphi_i : \| \beta \|_2^2 \leq \frac{1}{C}
\right \}.
$
\end{lemma}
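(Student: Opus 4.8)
The plan is to prove this by a Hahn–Banach separation argument, since the conclusion is that $f$ lies in a certain closed convex set. Let $\mathcal{K} := \mathrm{conv}(\mathcal{C}_1 \cup \mathcal{C}_2)$; since $\mathcal{C}_1$ is the closed unit ball of $E$ and $\mathcal{C}_2$ is a bounded convex set spanned by finitely many vectors (hence closed), $\mathcal{K}$ is a closed, convex, absolutely convex (symmetric) subset of $E$. Suppose, for contradiction, that $f \notin \mathcal{K}$ with $\|f\|_C \le 1$. By the Hahn–Banach separation theorem there exists $x^* \in E^*$ and a real number $t$ with $\mathrm{Re}\, x^*(g) \le t$ for all $g \in \mathcal{K}$ and $\mathrm{Re}\, x^*(f) > t$; by symmetry of $\mathcal{K}$ we may take $t = \sup_{g \in \mathcal{K}} |x^*(g)| \ge 0$, and after rescaling $x^*$ we may assume $t \le 1$, so that $|x^*(g)| \le 1$ for all $g \in \mathcal{K}$ while $|x^*(f)| > 1$.

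The next step is to extract from the two pieces of $\mathcal{K}$ the two defining inequalities of the set $X^*_{1,C}$ from \eqref{eq:C}. Testing the separating functional against $\mathcal{C}_1$ gives $\|x^*\| = \sup_{\|g\| \le 1} |x^*(g)| \le 1$. Testing against $\mathcal{C}_2$: for any $\beta \in \mathbb{C}^n$ with $\|\beta\|_2^2 \le 1/C$ we have $\left| \sum_{i=1}^n \beta_i x^*(\varphi_i) \right| \le 1$; optimizing over such $\beta$ (choosing $\beta_i$ proportional to $\overline{x^*(\varphi_i)}$) yields $\frac{1}{\sqrt{C}} \left( \sum_{i=1}^n |x^*(\varphi_i)|^2 \right)^{1/2} \le 1$, i.e. $\sum_{i=1}^n |x^*(\varphi_i)|^p \le C$ in the relevant case $p = 2$ (and more generally this is exactly the constraint in \eqref{eq:C} since here $\mathcal{C}_2$ is built from the Euclidean ball, matching $q = 2$). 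Hence $x^* \in X^*_{1,C}$.

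But by the definition of $\| \cdot \|_C$ via its dual unit ball $X^*_{1,C}$, membership $x^* \in X^*_{1,C}$ forces $|x^*(f)| \le \|x^*\|_C^* \cdot \|f\|_C \le \|f\|_C \le 1$, contradicting $|x^*(f)| > 1$. Therefore $f \in \mathcal{K}$, as claimed. The only point requiring a little care — and the step I expect to be the main obstacle — is matching the normalization conventions: one must check that the functional produced by separation, after the rescaling that makes $\sup_{\mathcal{K}} |x^*| \le 1$, indeed satisfies \emph{both} $\|x^*\| \le 1$ and the $\ell^p$-sum bound with the \emph{same} constant $C$ appearing in $\mathcal{C}_2$, rather than some constant off by a bounded factor; this is why $\mathcal{C}_2$ must be taken with radius exactly $1/\sqrt{C}$ in $\ell^2$, and it is the reason the lemma is stated with that precise constant. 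Once the bookkeeping is aligned, the separation argument closes immediately.
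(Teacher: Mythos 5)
Your separation argument is correct and is exactly the standard duality (bipolar) proof of this fact; the paper itself does not prove the lemma but only cites Talagrand's book, where the argument is the same: a separating functional, normalized so that $\sup_{\mathcal K}|x^*|\leq 1$, must lie in $X^*_{1,C}$ because testing on $\mathcal{C}_1$ gives $\|x^*\|\leq 1$ and testing on $\mathcal{C}_2$ gives $\sum_i |x^*(\varphi_i)|^2\leq C$, contradicting $\|f\|_C\leq 1$. One small point to tighten: the convex hull of a union of two closed convex sets need not be closed in general, so to justify applying Hahn--Banach you should invoke that $\mathcal{C}_2$ is \emph{compact} (a continuous linear image of a ball in $\mathbb{C}^n$) while $\mathcal{C}_1$ is closed and bounded, which does make $\mathrm{conv}(\mathcal{C}_1\cup\mathcal{C}_2)$ norm-closed; with that noted, the proof is complete.
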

\vskip 0.125in
\begin{lemma}
\label{abstractLemma}
Let $\alpha > 0$ and $p_1 > 2$ with conjugate $q_1$.  Define, $U_{J_\alpha} : l_{J_\alpha}^2 \rightarrow E$, where $J_{\alpha}$ is defined by an i.i.d. Bernoulli random vector with parameter $\delta$ as in equation \ref{eq:ber}.  Define $S:=\max_{1 \leq i \leq n}\|\varphi_i\|_{p_1}$ and set $\delta = \frac{1}{e \log^{\alpha+1}(n)}$.  Set $C_{\alpha} = 
S^2\log^{\alpha + 2}(n)$.  Then,
$$
\mathbb{E}\|U_{J_{\alpha}}\|_{C_\alpha} \leq K(\eta) \cdot S \sqrt {\log n}
$$
where $\eta > 0$ is a parameter that depends on the $2$-convexity of $\|\cdot\|_{q_1}$ in $E^{*}=L^{q_1}(\Omega)$ and $K(\eta)$ is a constant depending only on $\eta$. 
\end{lemma}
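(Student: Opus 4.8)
The plan is to apply Theorem~\ref{TalTech} directly with the space $X = E = L^{p_1}(\Omega)$, the elements $x_i = \varphi_i$, the exponent pair $(q,p) = (2,2)$, and a carefully chosen $\varepsilon$. Since $E^* = L^{q_1}(\Omega)$ is $2$-convex with some $\eta > 0$ (as noted in the paragraph before Lemma~\ref{hahnbanach}, because $q_1 \le 2$), the hypotheses on the Banach space are met with $p = 2$. We set $S = \max_i \|\varphi_i\|_{p_1}$ as in Theorem~\ref{TalTech}; note $S \le 1$ since $\|\varphi_i\|_\infty \le 1$ on a probability space, though we will carry $S$ symbolically. With $p = 2$ the quantity $B$ in Theorem~\ref{TalTech} is $B = \max(C, K(2,\eta)\, S^2 \log n)$.

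The main computation is to choose $\varepsilon$ and check the sandwich hypothesis
$$
\delta \le \frac{S^2}{B\,\varepsilon\, n^{\varepsilon}} \le 1.
$$
Here $\delta = 1/(e\log^{\alpha+1} n)$ and $C = C_\alpha = S^2 \log^{\alpha+2} n$. Since $C_\alpha = S^2\log^{\alpha+2} n$ dominates $K(2,\eta) S^2 \log n$ for $n$ large (for small $n$ one absorbs into the constant), we have $B = C_\alpha$ up to the constant, so $S^2/B \approx 1/\log^{\alpha+2} n$. I would take $\varepsilon = 1/\log n$, so that $n^\varepsilon = e$ and $\varepsilon n^\varepsilon = e/\log n$; then
$$
\frac{S^2}{B\,\varepsilon\, n^\varepsilon} \approx \frac{1}{\log^{\alpha+2} n}\cdot\frac{\log n}{e} = \frac{1}{e\log^{\alpha+1} n} = \delta,
$$
so the left inequality holds (with equality up to the constant hidden in $B$ versus $C_\alpha$; a minor adjustment of the constant in $C_\alpha$ or a $\le$ in place of $=$ handles this), and the right inequality $S^2/(B\varepsilon n^\varepsilon) \le 1$ is clear since that expression is $\asymp 1/\log^{\alpha+1} n \le 1$. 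One must be slightly careful that $\delta \le 1$, which holds for $n \ge 2$, and that $\varepsilon \le 1$, which is automatic.

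Feeding this into the conclusion of Theorem~\ref{TalTech} gives
$$
\mathbb{E}\|U_{J_\alpha}\|_{C_\alpha}^2 \le K(2,\eta)\frac{S^2}{\varepsilon} = K(2,\eta)\, S^2 \log n,
$$
and then Jensen's inequality (concavity of the square root) yields
$$
\mathbb{E}\|U_{J_\alpha}\|_{C_\alpha} \le \left(\mathbb{E}\|U_{J_\alpha}\|_{C_\alpha}^2\right)^{1/2} \le \sqrt{K(2,\eta)}\, S\sqrt{\log n},
$$
which is the claimed bound after renaming the constant $K(\eta) := \sqrt{K(2,\eta)}$ (note $K(2,\eta)$ depends only on $\eta$ since $p=2$ is fixed).

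The step I expect to require the most care is the verification that $B = C_\alpha$ rather than $B = K(2,\eta) S^2\log n$, i.e.\ that the $\log^{\alpha+2} n$ term genuinely dominates; this needs $\log^{\alpha+1} n \ge K(2,\eta)$, which fails for small $n$. The clean fix is to observe that the whole statement is only interesting for $n$ large, and for $n$ below the threshold the inequality $\mathbb{E}\|U_{J_\alpha}\|_{C_\alpha} \le K(\eta) S\sqrt{\log n}$ can be forced by enlarging $K(\eta)$ (since $\|U_{J_\alpha}\|_{C_\alpha}$ is bounded in terms of $n$ and $S$ alone, e.g.\ via Lemma~\ref{hahnbanach} or a crude triangle-inequality estimate). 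Everything else is a routine substitution into Theorem~\ref{TalTech} with $p=2$.
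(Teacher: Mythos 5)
Your proposal is correct and follows essentially the same route as the paper: apply Theorem~\ref{TalTech} with $X=E=L^{p_1}$, $p=q=2$, $\varepsilon=1/\log n$, verify $\delta = S^2/(B\varepsilon n^{\varepsilon})$ with $B=C_\alpha$, and pass from $\mathbb{E}\|U_{J_\alpha}\|_{C_\alpha}^2$ to $\mathbb{E}\|U_{J_\alpha}\|_{C_\alpha}$ via $(\mathbb{E}Z)^2\le\mathbb{E}(Z^2)$. Your extra caveat about $C_\alpha$ dominating $K(2,\eta)S^2\log n$ only for large $n$ is a legitimate point the paper elides, but it does not change the argument.
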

\begin{proof}
Set $X:=E$, $p := 2$, $q := 2$, $x_i := \varphi_i$ for each $i \in [n]$, and $\varepsilon := \frac{1}{\log n}$.  Since $\|\cdot\|^{*} = \| \cdot \|_{q_1}$ is $2$-convex,  we have aligned our notation with Theorem \ref{TalTech}.  The parameter, $B=\max(C_{\alpha}, K(\eta) S^2 \log(n)) = C_{\alpha}$ by definition of $C_{\alpha}$. Hence,
$$
D=\frac{S^2}{B \varepsilon n^{\varepsilon}}=
\frac{S^2 \log n}{S^2\log^{\alpha+2}(n) e}= 
\frac{1}{\log^{\alpha+1}(n) e} = \delta.
$$
The assumptions of Theorem \ref{TalTech} are satisfied and we get that,
$$
\mathbb{E}\|U_{J_{\alpha}}\|_{C_{\alpha}}^2 \leq \frac{S^2 K(\eta)}{\varepsilon}\leq S^2 K(\eta)\log n.
$$
Noting the bound, $(\mathbb{E} Z)^2 \leq \mathbb{E} (Z^2)$, for a random variable $Z$, we complete the proof.
\end{proof}
\vskip 0.125in
We now go back to a more concrete notation since most of our abstract applications are complete.
\begin{lemma}
\label{genericstatement}
Let $\alpha >0$ and $p_1 > 2$.  Choose $J_{\alpha} \subset [n]$ depending on a Bernoulli i.i.d. random vector with parameters, $C_{\alpha}$ and $\delta$ as in Lemma \ref{abstractLemma}.  Then with probability at least $1/4$ over $J_{\alpha} \subset [n]$, the probability distribution of Lemma \ref{abstractLemma},  the following statement holds, $|J_{\alpha}| \geq \frac{n}{e \log^{\alpha+1}(n)}$ and for any $a \in \mathbb{C}^n$, setting $f = \sum_{i \in J_{\alpha}} a_i \varphi_i$,
$$
\|f\|_{C_{\alpha}} \leq K \cdot S\sqrt{\log n}\cdot \|a\|_2,
$$
where $S = \max_{1 \leq i \leq n}\|\varphi_i\|_{p_1}$ and $K > 0$ is a constant.
\end{lemma}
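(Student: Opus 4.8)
The plan is to convert the expectation estimate of Lemma \ref{abstractLemma} into an ``in probability'' statement via Markov's inequality, to control $|J_\alpha|$ by a standard binomial lower-tail bound, and to combine the two by a union bound. Throughout, write $C = C_\alpha$ and $S = \max_{1\le i\le n}\|\varphi_i\|_{p_1}$, and recall $n\delta = \frac{n}{e\log^{\alpha+1}(n)}$.

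First I would unpack the operator norm. Since $U_{J_\alpha}\colon l^2_{J_\alpha}\to E$ is the map $e_i\mapsto\varphi_i$, for $a=(a_i)_{i\in J_\alpha}$ we have $U_{J_\alpha}a=\sum_{i\in J_\alpha}a_i\varphi_i$, so
$$
\|U_{J_\alpha}\|_{C_\alpha}=\sup_{\|a\|_2\le 1}\Big\|\sum_{i\in J_\alpha}a_i\varphi_i\Big\|_{C_\alpha}.
$$
Hence the event $\{\|U_{J_\alpha}\|_{C_\alpha}\le M\}$ is exactly the event that $\big\|\sum_{i\in J_\alpha}a_i\varphi_i\big\|_{C_\alpha}\le M\|a\|_2$ for every $a$ supported on $J_\alpha$; since padding such an $a$ by zeros to a vector of $\mathbb{C}^n$ only increases $\|a\|_2$, on this event the inequality of the lemma holds for every $a\in\mathbb{C}^n$ with $f=\sum_{i\in J_\alpha}a_i\varphi_i$. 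So it is enough to exhibit, with probability at least $1/4$, the simultaneous occurrence of $\{|J_\alpha|\ge n\delta\}$ and $\{\|U_{J_\alpha}\|_{C_\alpha}\le K\,S\sqrt{\log n}\}$ for a suitable constant $K$.

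Second, by Lemma \ref{abstractLemma}, $\mathbb{E}\|U_{J_\alpha}\|_{C_\alpha}\le K(\eta)\,S\sqrt{\log n}$, so Markov's inequality gives
$$
\Pr\!\Big(\|U_{J_\alpha}\|_{C_\alpha}>10\,K(\eta)\,S\sqrt{\log n}\Big)<\tfrac{1}{10},
$$
where the factor $10$ is chosen to leave slack for the union bound and is absorbed into the final $K$. Third, $|J_\alpha|=\sum_{i=1}^n\zeta_i$ is $\mathrm{Bin}(n,\delta)$ with mean $n\delta$; a standard lower-tail estimate for the binomial at its mean (e.g. the fact that the median of a binomial lies within $1$ of its mean, or a quantitative central limit estimate since $|J_\alpha|$ is integer-valued so $\{|J_\alpha|\ge n\delta\}=\{|J_\alpha|\ge\lceil n\delta\rceil\}$) yields $\Pr(|J_\alpha|\ge n\delta)\ge \tfrac{1}{2}-o(1)$, which is at least $\tfrac{2}{5}$ for $n$ large (for the few small $n$ the setup of Lemma \ref{abstractLemma} already forces $\delta\le1$, so nothing new is needed). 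A union bound then gives $\Pr\big(\{|J_\alpha|\ge n\delta\}\cap\{\|U_{J_\alpha}\|_{C_\alpha}\le 10K(\eta)S\sqrt{\log n}\}\big)\ge\tfrac{2}{5}-\tfrac{1}{10}>\tfrac{1}{4}$, and on this event both assertions of the lemma hold with $K:=10\,K(\eta)$.

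The only point requiring care is this probability accounting: a binomial random variable falls below its mean with probability essentially as large as $\tfrac{1}{2}$, so the success probability cannot be pushed much beyond $\tfrac{1}{2}$ — which is exactly why the clean threshold in the statement is $1/4$ (roughly $\tfrac{1}{2}$ is ``spent'' on the cardinality event and the rest on the Markov estimate). No further obstacle arises; the substantive work is already contained in Lemma \ref{abstractLemma}, hence ultimately in Theorem \ref{TalTech}.
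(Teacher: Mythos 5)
Your proposal is correct and follows essentially the same route as the paper: apply Markov's inequality to the expectation bound of Lemma \ref{abstractLemma}, lower-bound $\Pr(|J_\alpha|\ge n\delta)$ by the binomial mean/median fact, and combine via a union bound (the paper uses the factor $4$ and the bounds $1/4+1/2$ rather than your $10$ and $1/10+3/5$, but this is only bookkeeping). Your extra care about whether a binomial exceeds its (possibly non-integer) mean with probability at least $1/2$ is a point the paper glosses over with ``basic facts from probability theory.''
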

\begin{proof}
Set $K = K(\eta)$ to be a constant since $\eta$ is a constant depending only on $2$.  By Lemma \ref{abstractLemma} we have that,
$$
\mathbb{E}\|U_{J_\alpha}\|_{C_\alpha} \leq K \cdot S \sqrt{\log n}.
$$
By Markov's inequality,
$$
\Pr \left (\|U_{J_\alpha}\|_{C_{\alpha}} \geq 4K \cdot S \cdot \sqrt{\log n} \right) \leq 1/4.
$$
Hence, with probability at most $1/4$,
$$
\|f\|_{C_\alpha} \geq 4 \cdot K \cdot  S \cdot \sqrt{\log n} \|a\|_2,
$$
for some $f$ as defined in the Lemma statement.
$|J_\alpha|$ is distributed as a binomial random variable with parameter, $(n,\delta)$, where $\delta = \frac{1}{e \log^{\alpha+1}(n)}$.  Thus $|J_{\alpha}|$ has mean, $\frac{n}{e \log^{\alpha+1}(n)}$.  Hence, from basic facts from probability theory we have,
$$
\Pr \left (|J_{\alpha}| \geq  \frac{n}{e \log^{\alpha+1}(n)} 
\right ) \geq 1/2.
$$
From a union bound we have that,
$$
\Pr\left (
|J_{\alpha}| \geq  \frac{n}{e \log^{\alpha+1}(n)} \text{ and }
\sup_{\|a\|_2 \leq 1}
\left\| \sum_{i \in J_{\alpha}} a_i \varphi_i  
\right\|_{C_{\alpha}} \leq 4 \cdot K \cdot S \cdot \sqrt{\log n}
\right ) \geq 1/4.
$$
\end{proof}
\vskip 0.125in
\begin{theorem}
\label{main}
Let $(\varphi_i)_{i=1}^n$ be mutually orthogonal functions on a probability space, $(\Omega, P)$, such that $\|\varphi_i\|_\infty \leq 1 $ for all $i \in [n]$.  Let $\alpha > 0$.  Let $\Phi(u) = u^2 \log^{\alpha}(u)$ for $u \geq u_{0}$ and $\Phi(u) = c(\alpha) u^2$ otherwise.  $u_0 \geq e$ and $c(\alpha)$ are constants, depending on $\alpha$,  chosen so that $\Phi$ is a Young function.  Then with probability at least $1/4$ over subsets $I$ of $[n]$, the following holds: $|I| \geq \frac{n}{e \log^{\alpha+1}(n)} $ and
\begin{align}
\label{eq:main}
\left \|\sum_{i \in I} a_i \varphi_i \right \|_{\Phi} \leq K(\alpha) \cdot \log^{\frac{\alpha}{2}}(u_0+S^2\log n) \cdot \|a\|_2
\end{align}
for any $a \in \mathbb{C}^n$.  $K(\alpha)$ is a constant depending only on $\alpha$, and $S = \max_{1 \leq i \leq n}\|\varphi_i\|_{p_1}$ where $p_1 - 2 > 0$ is a constant only depending on $\alpha$.
\end{theorem}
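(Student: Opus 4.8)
The plan is to feed Lemma \ref{genericstatement} into Lemma \ref{hahnbanach} and then turn the resulting abstract norm bound into an Orlicz bound by a single interpolation estimate. First I would fix an exponent $p_1 = p_1(\alpha) > 2$ once and for all (its value will only affect constants) and apply Lemma \ref{genericstatement} with this $p_1$ and the given $\alpha$: with probability at least $1/4$ the set $I := J_\alpha$ satisfies $|I| \ge \frac{n}{e\log^{\alpha+1}(n)}$ and, writing $S = \max_i\|\varphi_i\|_{p_1}$ and $C_\alpha = S^2\log^{\alpha+2}(n)$, one has $\big\|\sum_{i\in I}a_i\varphi_i\big\|_{C_\alpha} \le K S\sqrt{\log n}\,\|a\|_2$ for every $a\in\mathbb C^n$. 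I fix such an $I$ and argue deterministically from now on.

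Given $a$, put $f = \sum_{i\in I}a_i\varphi_i$ and $M = K S\sqrt{\log n}\,\|a\|_2$, so $\|f/M\|_{C_\alpha}\le 1$. By Lemma \ref{hahnbanach} I can write $f/M = \lambda g + (1-\lambda)h$ with $\lambda\in[0,1]$, $\|g\|_{p_1}\le 1$, and $h = \sum_i\beta_i\varphi_i$ with $\|\beta\|_2^2\le 1/C_\alpha$; hence $f = u + v$ with $u = M\lambda g$, so $\|u\|_{p_1}\le M$, and $v = M(1-\lambda)h = \sum_i\gamma_i\varphi_i$, so $\|\gamma\|_2 \le M/\sqrt{C_\alpha} = K\|a\|_2/\log^{(\alpha+1)/2}(n)$. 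It then suffices to bound $\|u\|_\Phi$ and $\|v\|_\Phi$ separately and add.

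For $v$: orthogonality with $\|\varphi_i\|_2\le S$ gives $\|v\|_2\le S\|\gamma\|_2$, and $\|\varphi_i\|_\infty\le1$ gives $\|v\|_\infty\le\|\gamma\|_1\le\sqrt n\,\|\gamma\|_2$; since $t\mapsto\Phi(t)/t^2$ is nondecreasing one has the elementary bound $\|w\|_\Phi \le C(\alpha)\|w\|_2\log^{\alpha/2}(e + \|w\|_\infty/\|w\|_2)$ for all $w$, and applying it to $v$ gives $\|v\|_\Phi \le C(\alpha)S\|a\|_2/\sqrt{\log n}$, which is far below the target. For $u$ (the crux): from $f = u+v$ and $\|f\|_2\le S\|a\|_2$ we get $\|u\|_2\le \|f\|_2 + \|v\|_2 \le 2S\|a\|_2$, while $\|u\|_{p_1}\le M = K S\sqrt{\log n}\,\|a\|_2$. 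I would invoke the interpolation estimate
$$ \|w\|_\Phi \le C(\alpha,p_1)\,\|w\|_2\,\log^{\alpha/2}\!\Big(e+\frac{\|w\|_{p_1}}{\|w\|_2}\Big),\qquad w\in L^{p_1}. $$
Since $\|u\|_{p_1}/\|u\|_2 \le K\sqrt{\log n}$, this yields $\|u\|_\Phi \le C'(\alpha)S\|a\|_2\log^{\alpha/2}(\log n)$, and adding the two pieces gives a bound of the shape $K(\alpha)S^2\log^{\alpha}(S^2\log n)\|a\|_2$ (using $S\le1$, $\log^{\alpha/2}\log n\le\log^{\alpha}(\log n)$, and enlarging $K(\alpha)$ to absorb lower--order terms and the finitely many small $n$). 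As the good event has probability $\ge1/4$, this proves the theorem.

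The hard part is the interpolation estimate. To prove it I would normalize $\|w\|_2 = 1$, set $r = \|w\|_{p_1}$, and estimate $\inf\{k:\mathbb E\,\Phi(|w|/k)\le1\}$: for fixed $k$, $\mathbb E\,\Phi(|w|/k)$ is a linear functional of the law of $|w|$ subject to the two moment constraints $\mathbb E|w|^2\le1$ and $\mathbb E|w|^{p_1}\le r^{p_1}$, so its maximum is attained at an extreme point --- a two-level function $|w| = B\mathbf{1}_E$ with $\mu = |E|$. Optimizing $B$ and $\mu$ against the constraints forces $\mu\asymp r^{-2p_1/(p_1-2)}$, and using $\Phi^{-1}(1/\mu)\asymp\mu^{-1/2}\log^{-\alpha/2}(1/\mu)$ one obtains $\mathbb E\,\Phi(|w|/k)\le C(\alpha,p_1)\,k^{-2}\log^{\alpha}(e+r)$, so $k\asymp\log^{\alpha/2}(e+r)$ suffices. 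The delicate point is precisely that this produces the logarithmic exponent $\alpha/2$ (the ``$\Phi^{-1}$ scale'') rather than the $\tfrac{\alpha+1}{2}$ that a crude layer--cake bound would give; obtaining the sharp exponent is what makes the extremal two-level function (equivalently, an optimally chosen truncation level in a direct distribution-function argument) necessary.
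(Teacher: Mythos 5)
Your proposal is correct in substance, but it reorganizes the argument in a way that is genuinely different from the paper, and it actually lands on a slightly sharper bound. The paper truncates $f$ by value at the level $D=(S^{2}\log n)^{p_1/(2(p_1-r))}$, controls $\|f\mathbf 1_{|f|\le D}\|_\Phi$ by a layer-cake computation using only $\|f\|_2$, and invokes Lemma \ref{hahnbanach} solely to get a tail estimate $\Pr(|f|\ge u)$ for $u\ge D$. You instead apply Lemma \ref{hahnbanach} globally to write $f=u+v$ with $\|u\|_{p_1}\le KS\sqrt{\log n}\,\|a\|_2$ and $v$ a coefficient-small combination of the $\varphi_i$, and then estimate each piece by a self-contained $(L^2,L^{p_1})\to L^\Phi$ interpolation inequality. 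The key estimate $\|w\|_\Phi\le C\|w\|_2\log^{\alpha/2}(e+\|w\|_{p_1}/\|w\|_2)$ is correct: splitting at the threshold $T=(\|w\|_{p_1}^{p_1}/\|w\|_2^{2})^{1/(p_1-2)}$ (which is exactly the paper's $D$) gives $\mathbb E\,\Phi(|w|/k)\le Ck^{-2}\|w\|_2^{2}\log^{\alpha}(e+T)$, and choosing $k$ accordingly closes it. Because you optimize $k$ using the quadratic homogeneity of $\Phi$, you get exponent $\alpha/2$ on the iterated logarithm, whereas the paper passes from $\mathbb E\,\Phi(|g|)$ to $\|g\|_\Phi$ via the linear bound $\Phi(u/k)\le\Phi(u)/k$ and so ends with exponent $\alpha$; your route therefore proves the theorem (in its intended normalization $S\le 1$, as in the remark following it) with room to spare.

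Two points you should tighten. First, the extreme-point justification of the interpolation inequality is not rigorous as written: the set of laws on $[0,\infty)$ with two moment constraints has extreme points supported on up to three points, not two, and attainment of the supremum of the linear functional needs an argument since the constraint set is not weakly compact. This is harmless only because the parenthetical alternative you give --- the direct distribution-function/truncation argument at level $T$ --- does work and should be the actual proof. Second, when you substitute the upper bounds $\|u\|_2\le 2S\|a\|_2$ and $\|u\|_{p_1}\le KS\sqrt{\log n}\,\|a\|_2$ into a bound involving the ratio $\|u\|_{p_1}/\|u\|_2$, you are implicitly using that $t\mapsto t\log^{\alpha/2}(e+r/t)$ is (up to an $\alpha$-dependent constant) increasing in $t$ and increasing in $r$; this is true and easy, but it is a step, not a tautology, and it is cleaner to avoid it by carrying $\|w\|_2^{2}\log^{\alpha}(e+T)$ through the proof of the interpolation inequality and bounding that quantity directly at the corner of the admissible region. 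Neither issue threatens the argument.
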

\begin{remark}
In Theorem \ref{eq:main}, $S \leq \max_{1 \leq i \leq n}\|\varphi_i\|_{\infty} \leq 1$, so we can formulate the factor more simply as 
$$
K(\alpha) \cdot \log^{\frac{\alpha}{2}}(\log n).
$$
\end{remark}

\begin{remark}
Equation \ref{eq:main} has the following trivial version:
$$
\left\|
\sum_{i=1}^n a_i \varphi_i
\right\|_{\Phi} \leq
K(\alpha) \cdot \log^{\frac{\alpha}{2}}(n) \cdot \|a\|_2.
$$
Indeed, assuming WLOG that $\|a\|_2=1$ and setting $f:=\sum_{i=1}^n a_i \varphi_i$, 
$$
\left\|
f
\right\|_{\Phi} =
\inf \left \{
k > 0: \mathbb{E} \left [\Phi \left (\frac{|f|}{k} \right ) \right]
\leq 1
\right \}
$$
$$
=
\inf \left \{
k > 0 :
\frac{c(\alpha)\mathbb{E} 
\left [
\left |f
\right|^2 \cdot 
\bold{1}
_{\left \{
\frac{|f|}{k} \leq u_0
\right \}}
\right ]
}{k^2}
+
\frac{\mathbb{E} 
\left [
\left |
f
\right|^2 \log^{\alpha}
\left(
\frac{ \left|
f
\right|}{k}
\right )
\cdot
\bold{1}_{
\left \{
\frac{|f| }{k} \geq u_0
\right \}}
\right ]
}{k^2} \leq 1
\right \}
$$
$$
\leq
\inf \left \{
k > 0 :
\frac{c(\alpha)
}{k^2}
+
\frac{\mathbb{E} 
\left [
\left |
f
\right|^2 \log^{\alpha}
\left(
\left|
f
\right|
\right )
\cdot
\bold{1}_{
\left \{
\frac{|f| }{k} \geq u_0
\right \}}
\right ]
}{k^2} \leq 1
\right \}
$$
$$
\leq
\inf \left \{
k \geq 1 :
\frac{c(\alpha)
}{k^2}
+
\frac{\mathbb{E} 
\left [
\left |
f
\right|^2 \log^{\alpha}
\left (
n^{\frac{1}{2}}
\right )
\right ]
}{k^2} \leq 1
\right \}
$$
$$
\leq
\inf \left \{
k \geq 1 :
\frac{c(\alpha)
+\log^{\alpha}
\left (
n^{\frac{1}{2}}
\right )}{k^2} \leq 1
\right \}
$$
$$
= \max
\left (1, \sqrt{
c(\alpha)
+ \frac{\log^{\alpha}(n)}{2^{\alpha}}
} 
\right) 
$$
$$
=
K(\alpha)\log^{\frac{\alpha}{2}}(n),
$$
where the second equality above is by definition, the third inequality is by noting that $\|f\|_2 \leq 1$ and that if $A \subset B$ then $\inf A \geq \inf B$, and the fourth inequality comes from Cauchy-Schwarz inside the $\log$ on $\left |\sum_i a_i \varphi_i(\omega) \right|$ and noting that $\|a\|_2=1$ and $\|\varphi_i\|_{\infty} \leq 1$.  Hence, for any $a \in \mathbb{C}^n$, we can apply the above bound to $\sum_i \frac{a_i}{\|a\|_2} \varphi_i $ and then multiply by $\|a\|_2$.
\end{remark}

\vskip 0.125in

Below is the proof of Theorem \ref{main}.
\begin{proof}
Throughout the proof, $\log := \ln$.
Set $C:= C_\alpha = S^2\log^{\alpha+2}(n)$.  We set $I := J_{\alpha}$ from Lemma \ref{genericstatement} and get
\begin{align}
\label{eq:probabilitystatemett}
\Pr\left (
|I| \geq  \frac{n}{e \log^{\alpha+1}(n)} \text{ and }
\sup_{\|a\|_2 \leq 1}
\left\| \sum_{i \in I} a_i \varphi_i  
\right\|_{C} \leq K \cdot S \sqrt{\log n}
\right ) \geq 1/4.
\end{align}
We assume $I$ satisfies the properties of equation \ref{eq:probabilitystatemett} from now on.
The proof proceeds as in Lemma 4.3 of \cite{Ryou22}.  Let $p_1 > r > 2$ be constants.  By the definition of $\Phi$, when $u \geq 1$,
\begin{align}
\label{eq:2}
\Phi^{'}(u) 
\leq (2 + \alpha) \frac{\Phi(u)}{u}
\leq K(\alpha) u^{r-1}.
\end{align}
Note that the left inequality, above, holds for $u \geq 0$.

Fix $f$, where $f = \sum_{i \in I} a_i \varphi_i$, and assume WLOG that $\|a\|_2=1$.  By the triangle inequality, for any $D > 0$,
$$
\|f\|_{\Phi} \leq \|f \cdot \bold{1}_{|f| \leq D}\|_{\Phi}
+\|f \cdot \bold{1}_{|f| \geq D}\|_{\Phi}
$$
$$
= I + II.
$$

\vskip 0.25in

We need to bound $I$ and $II$; thus, we will need the next set of calculations.  Set $Z  = \Phi(|f \cdot  \bold{1}_{|f| \leq D}|)$, and recall that $Z$ is defined on $(\Omega, P)$.  Recall that $\Phi$ is strictly increasing which implies that its inverse is strictly increasing.  We calculate the following:
$$
\mathbb{E}[Z] = \mathbb{E}\left (\int_{0}^{Z}dt \right )
=\mathbb{E} \int_{0}^{\infty} \bold{1}(t \leq Z) dt
$$
$$
= \int_{0}^{\infty} \mathbb{E} \bold{1}(t \leq Z) dt
= \int_{0}^{\infty} P[ Z \geq t] dt 
= \int_{\Phi(0)}^{\Phi(D)} P[ Z \geq t] dt
$$
$$
\leq \int_{\Phi(0)}^{\Phi(D)} P[\Phi(|f|) \geq t]dt
= \int_{0}^{D} \Phi^{'}(u) P[\Phi(|f|) \geq \Phi(u)]du
$$
\begin{align}
\label{firsteq}
= \int_{0}^{D} \Phi^{'}(u) P[|f| \geq u]du.
\end{align}
The third equality above is from Fubini's Theorem. The fourth equality above is from the identity, $\mathbb{E} \bold{1}(t \leq Z) = P[ Z \geq t]$.  The fifth equality above is from noting that $|f| \cdot \bold{1}_{|f|\leq  D}(\omega) = 0$ if $|f(\omega)| \not \in [0, D]$, and hence, $\Phi(0) \leq Z(\omega) = \Phi(|f \cdot \bold{1}_{|f|\leq D}|)(\omega) \leq \Phi(D)$, taking into consideration that $\Phi$ is increasing and $\Phi(0)=0$.  The sixth inequality comes from the fact that $Z \leq \Phi(|f|)$.  Defining $F(t) = P(\Phi(|f|) \geq t)$, the seventh equality comes from the substitution $t = \Phi(u)$.  The eighth equality comes from the fact that $\Phi$ is increasing, and hence $\Phi(|f|) \geq \Phi(u)$ if and only if $|f| \geq u$.  
\vskip 0.125in
Similarly, setting $Z' = \Phi(|f| \cdot \bold{1}_{|f| \geq D})$ we get,
\begin{align}
\label{eq:3}
\mathbb{E}[Z'] = 
\int_0^{\infty} P[\Phi(|f| \cdot \bold{1}_{|f|\geq D}) \geq t]dt
\end{align}
$$
=
\int_{0}^{\Phi(D)} 
P[\Phi(|f| \cdot \bold{1}_{|f|\geq D}) \geq t]dt+
 \int_{\Phi(D)}^{\infty} P[\Phi(|f| \cdot \bold{1}_{|f|\geq D}) \geq t]dt
$$
$$
=
\int_{0}^{\Phi(D)} 
P[|f| \geq D]dt+
 \int_{\Phi(D)}^{\infty} P[\Phi(|f| \cdot \bold{1}_{|f|\geq D}) \geq t]dt
$$
$$
= \Phi(D) \cdot P[|f| \geq D] + 
 \int_{D}^{\infty} \Phi'(t) \cdot P[|f|\geq t]dt
$$
We note that the third equality above comes from the fact that $P[Z' > 0] = P[Z' \geq t] = P[|f| \geq D]$ for all $t \in (0, \Phi(D))$.

\vskip 0.25in
Using techniques that will be useful to bound $II$, we warm-up by bounding $I$.  After the warm-up, we will get a sharper bound on $I$.  By convexity of $\Phi$ and since $\Phi(0) = 0$, it follows that $\Phi(u/k) \leq \Phi(u)/k$ for $k \geq 1$.
Thus, we bound $I$ and get that,
\begin{align}
\label{eq:4}
I \leq \inf 
\left \{k \geq 1 :
\mathbb{E} 
\left [
\Phi \left ( \frac{  |f| \cdot \bold{1}_{|f| \leq D}
}{k}
\right ) 
\right ]
\leq 1
\right \}
\end{align}
$$
\leq \inf \left\{k \geq 1 :
\frac{\mathbb{E} \left [\Phi \left 
( |f| \cdot \bold{1}_{|f| \leq D}
\right) \right ] }{k}
\leq 1
\right \}
$$
$$
\leq 
\inf \left\{k \geq 1 :
\frac{1}{k} \int_{0}^{D} \Phi^{'}(u) \cdot P(|f| \geq u)du
\leq 1
\right \}
$$
$$
= \max\left (
1, \int_{0}^{D} \Phi^{'}(u) \cdot P(|f| \geq u)du
\right ),
$$
where the first three inequalities above come from the fact that if $A \subset B$, then $\inf A \geq \inf B$, where the third inequality uses equation \ref{firsteq}.
\vskip 0.125in
Set $D = u_0 + (S^2\log n)^{p_1/{2(p_1-r)}}$.  Calculating the value of $\Phi'(u)$ and plugging into equations \ref{eq:2} and \ref{eq:4} it follows that,
$$
I \leq 1 + K(\alpha) \log^{\alpha}(u_0+S^2\log n) \int_0^{D} u \cdot P( |f| \geq u) du.
$$
$$
\leq 1 + K(\alpha)\log^{\alpha}(u_0+S^2 \log n) \int_{0}^\infty u \cdot P( |f| \geq u) du.
$$
$$
=1 + K(\alpha)\log^{\alpha} (u_0+S^2 \log n)\cdot  \|f\|_2^2 
\leq 1 + K(\alpha) \cdot S^2\log^{\alpha}(u_0+S^2 \log n).
$$
The last equality above is from the standard fact that for a random variable, $Y$, $\mathbb{E} |Y|^s$ can be written as 
$s\int_0^{\infty}u^{s-1}P(|Y| \geq u)du$ and noting that $\|f\|_2^2 \leq S^2$.  Indeed, since we assumed that $\|a\|_2 \leq1$ and, by assumption, $(\varphi_i)_{i=1}^n$ are orthogonal, we have that $$\|f\|_2^2 =
\sum_{i \in I} |a_i|^2 \|\varphi_i\|_2^2
\leq \sum_{i \in I} |a_i|^2 \|\varphi_i\|_{p_1}^2 \leq S^2 \sum_{i \in I} |a_i|^2 \leq S^2,
$$ 
where the second inequality above comes from the fact that if $Y$ is a random variable, $\|Y\|_2 \leq \|Y\|_s$ if $2 \leq s$.
\vskip 0.25in
We now obtain a sharper bound on $I$ than that of the warm-up.  Indeed,
\begin{align}
I \leq \inf 
\left \{k \geq 1 :
\mathbb{E} 
\left [
\Phi \left ( \frac{  |f| \cdot \bold{1}_{|f| \leq D}
}{k}
\right ) 
\right ]
\leq 1
\right \}
\end{align}
$$
\leq
\inf 
\left \{k \geq 1 :
\frac{c(\alpha)}{k^2}+
\mathbb{E} 
\left [
\frac{  |f|^2 \cdot \log^{\alpha}\left (\frac{|f|}{k} \right) \bold{1}_{ \left \{|f| \leq D, \frac{|f| \cdot \bold{1}(|f| \leq D)}{k} \geq u_0 \right \}}
}{k^2}
\right ]
\leq 1
\right \}
$$
$$
\leq
\inf 
\left \{k \geq 1 :
\frac{c(\alpha)}{k^2}+
\mathbb{E} 
\left [
\frac{ |f|^2 \cdot \log^{\alpha}(D)}{k^2}
\right ]
\leq 1
\right \}
$$
$$
\leq
\inf 
\left \{k \geq 1 :
\frac{(1+c(\alpha)) \cdot \log^{\alpha}(D)}{k^2}
\leq 1
\right \}
$$
$$
= \max\left (
1, K(\alpha) \cdot \log^{\frac{\alpha}{2}}(D)
\right)
$$
$$
= K(\alpha)\cdot \log^{\frac{\alpha}{2}}(D),
$$
where the third inequality above comes from the fact that if $k \geq 1$ and
$$
\log^{\alpha}\left (\frac{|f|}{k} \right) \bold{1}_{ \left \{|f| \leq D, \frac{|f| \cdot \bold{1}(|f| \leq D)}{k} \geq u_0 \right \}}(\omega) > 0,
$$
then 
$$
D \geq \frac{D}{k} \geq\frac{|f(\omega)|}{k}\geq u_0 \geq e,
$$
and hence,
$$
\log^{\alpha}(D) \geq \log^{\alpha}\left (\frac{|f(\omega)|}{k} \right).
$$
Recalling that $D = u_0 + (S^2\log n)^{p_1/{2(p_1-r)}}$, it follows that
$$
I \leq K(\alpha) \cdot \log^{\frac{\alpha}{2}}(u_0 + S^2 \log(n)).
$$
\vskip 0.125in

We bound $II$.  From equation \ref{eq:probabilitystatemett}, $\|f\|_C \leq K \cdot S \sqrt {\log n}$.  With $E := L^{p_1}(\Omega)$ and $\| \cdot \| := \|\cdot\|_{p_1}$, we invoke Lemma \ref{hahnbanach}.
Thus, there exists $t_1,t_2 \geq 0$ and $w_1$ and $w_2$ such that,
$$
f = t_1 w_1 + t_2 w_2
$$
where 
$t_1 + t_2 =1$ and
$$
\|w_1\|_{p_1} \leq K \cdot S \sqrt{\log n}, \text{  }
w_2 = \sum_{i = 1}^n b_i \varphi_i 
$$
such that
$$
\|b\|_2^2 \leq \frac{K S^2\log n}{C} = K\frac{S^2\log(n)}{S^2\log^{\alpha+2}(n)}
= \frac{K}{\log^{\alpha+1}(n)}.
$$
\vskip 0.25in
We would like to understand and bound the probability measure of $|f|$.  Thus, for $u \geq 0$,
by a union bound,
$$
P(|f| \geq u)
\leq P(|w_1| \geq u) + P(|w_2| \geq u). 
$$
Thus, by Markov's inequality on $|w_1|$ and $|w_2|$ we have,
\begin{align}
\label{eq:5}
P(|f| \geq u) \leq
K \frac{(S^2 \log n)^{p_1/2}}{u^{p_1}} +
\frac{K}{\log^{\alpha+1}(n)u^2}.
\end{align}

\vskip 0.25in
Similar to the warm-up in \ref{eq:4}, using \ref{eq:3} and \ref{eq:2},

\begin{align}
II \leq \inf 
\left \{k \geq 1 :
\mathbb{E} 
\left [
\Phi \left ( \frac{  |f| \cdot \bold{1}_{|f| \geq D}
}{k}
\right ) 
\right ]
\leq 1
\right \}
\end{align}
$$
\leq \inf \left\{k \geq 1 :
\frac{\mathbb{E} \left [\Phi \left 
( |f| \cdot \bold{1}_{|f| \geq D}
\right) \right ] }{k}
\leq 1
\right \}
$$
$$
\leq 
\inf \left\{k \geq 1 :
\frac{1}{k} \Phi(D)P(|f|\geq D)+
\frac{1}{k} \int_{D}^{\infty} \Phi^{'}(u) \cdot P(|f| \geq u)du
\leq 1
\right \}
$$
$$
\leq \max\left (
1, 2\int_{D}^{\infty} \Phi^{'}(u) \cdot P(|f| \geq u)du,
2 \cdot \Phi(D) P(|f| \geq D)
\right ).
$$
Thus,
$$
II \leq 1 + 
2 \int_{D}^{\infty} \Phi^{'}(u) \cdot P(|f| \geq u)du + 2 \Phi(D) \cdot P(|f| \geq D)
$$
$$
\leq 1 +
K(\alpha)\int_{D}^{\infty} u^{r-1}\cdot \frac{(S^2\log n)^{p_1/2}}{u^{p_1}} du
+
K \int_{D}^{\|w_2\|_{\infty}} \frac{\Phi^{'}(u) }{\log^{\alpha+1}(n)u^2} du
+ 2 \Phi(D) \cdot P(|f| \geq D).
$$
$$
= 1+III + IV +V,
$$
where the second inequality comes from equation \ref{eq:5}.

Recalling that $D= u_0+(S^2 \log n)^{\frac{p_1}{2(p_1-r)}}$, we have that
$$
III \leq K(\alpha) (S^2\log(n))^{p_1/2}\frac{1}{D^{p_1-r}} \leq K(\alpha).
$$

\vskip 0.25in
We bound $IV$.  Note that $\|w_2\|_{\infty} \leq n^{1/2}$ since $\|b\|_2 \leq 1$ in the definition of $w_2$.
By equation \ref{eq:2} and since $\|w_2\|_{\infty} \leq n^{1/2}$,
$$
IV \leq K(\alpha) \int_{e}^{n} \frac{\log^{\alpha}(u) u}{ \log^{\alpha+1}(n) u^2} du = 
\frac{K(\alpha)}{\log^{\alpha+1}(n)} \int_{e}^{n} \frac{\log^{\alpha}(u)}{u} du.
$$
$$
\leq \frac{K(\alpha)}{(\alpha+1) \log^{\alpha+1}(n)} \cdot \log^{\alpha+1}(n) = K(\alpha)
$$
\vskip 0.25in
Finally, we bound $V$.  By equation \ref{eq:5}, we have that
$$
V = 2 \cdot \Phi(D) \cdot P(|f| \geq D) 
$$
$$
\leq 2 \cdot D^2 \cdot \log^\alpha(D) \cdot \left(
K \frac{(S^2 \log n)^{p_1/2}}{D^{p_1}} +
\frac{K}{\log^{\alpha+1}(n)D^2}
\right)
$$
$$
\leq  \frac{K\log^\alpha(D) \cdot (S^2\log n)^{p_1/2}}{D^{p_1-2}} +
\frac{K \cdot \log^{\alpha}(D)}{\log^{\alpha+1}(n)}
$$
$$
\leq \frac{K\log^{\alpha}(D) \cdot D^{p_1-r}}{D^{p_1-2}}+K(\alpha)
$$
$$
\leq \frac{K \log^\alpha(D)}{D^{r-2}} + K(\alpha)
$$
$$
\leq K(\alpha),
$$
where the fourth inequality above comes from the definition of $D$ and noting that $S = \max_{1\leq i \leq n} \|\varphi_i\|_{p_1} \leq \max_{1\leq i \leq n} \|\varphi_i\|_{\infty}\ \leq 1$,
and the last inequality comes from the assumption that $r - 2 >0$ is constant.

\vskip 0.25in
Hence, $$II \leq 1 + III+IV +V \leq K(\alpha)$$  Thus, we have
$$
\|f\|_\Phi \leq I + II 
$$
$$
\leq K(\alpha) \left (1+\log^{\frac{\alpha}{2}}(u_0+S^2\log n) \right)
$$
$$
= K(\alpha) \cdot 
\log^{\frac{\alpha}{2}}(u_0+S^2\log n).
$$
\vskip 0.125in
Thus, if $a \in \mathbb{C}^n$, we can set $f/\|a\|_2 := \sum_{i \in I}a_i\varphi_{i}/\|a\|_2$.  Thus,
$$
\| f\|_{\Phi}/\|a\|_2  \leq K(\alpha) \cdot \log^{\frac{\alpha}{2}}(u_0+S^2\log n),
$$
which completes the proof.
\end{proof}

\vskip 0.125in

\subsection{Probabilistic Sharpness Result}
The result of this section uses some ideas similar to those of the probabilistic sharpness result of \cite{Bourgain89_2}.  As before, constants can vary line by line.  
\begin{theorem}
Fix $\rho > 0$ and $\alpha > 0$ and set $n := \frac{m}{2 \rho}m^m$.  Consider the probability space $[0,1]$ with Lebesgue measure.  For $k \in [n]$,
we set 
$$
\varphi_k := \exp(-2 \pi i k (\cdot)),
$$
the orthonormal characters on $[0,1]$.  Let $\left\{ \zeta_k  \right\}_{k=1}^n$ 
be $\{0,1\}$-valued and i.i.d. with mean $\delta = \frac{1}{\log^{\rho}(n)}$.  Let $J_{\omega}$ be the random set,
$$
J_{\omega} := \left \{
k \in [n] : \zeta_k(\omega) = 1
\right \}.
$$
Let $\Phi(u) = u^2 \log^{\alpha}(u)$ for $u \geq u_{0}$, and $\Phi(u) = c(\alpha) u^2$ otherwise.  $u_0 \geq e$ and $c(\alpha)$ are constants, depending on $\alpha$,  chosen so that $\Phi$ is a Young function.  Then,
\begin{equation}
\label{eq:sharpness_main}
\mathbb{E}_J \left (\sup_{\|a\|_2 \leq 1} \left \| \sum_{k \in J}
a_k \varphi_k
\right\|_{\Phi}
\right )
\geq
K(\alpha, \rho)  \log^{\frac{\alpha}{2}}(\log n).
\end{equation}
\end{theorem}
\begin{remark}
We state our main result, Theorem \ref{main}, in terms of a probability over subsets of $[n]$ following a Bernoulli distribution.  We could have, just as easily, stated our main result in terms of an expectation over subsets of $[n]$ as in equation \ref{eq:sharpness_main}.  The latter form, using the expectation, is the formulation of Theorem 1.5 of \cite{Ryou22} (we note that the main technical machinery of Theorem 1.5 is Lemma 4.3, of \cite{Ryou22}, which uses similar techniques to Theorem \ref{main}).  Hence, we can rule out the common randomized technique, subset selection of $[n]$ by a Bernoulli distribution on $[n]$'s indices, to show the existence of a subset which sharpens the bound of equation \ref{eq:main}.  The question of whether the bound of equation \ref{eq:main} is optimal remains open, but equation \ref{eq:sharpness_main} gives evidence that the bound is indeed optimal.  
\end{remark}

\begin{proof}
Throughout the proof, $\log:= \ln$.  By the defintion of $n$, we have that
\begin{equation}
\label{eq:sharpness_5}
m = K(\rho) \frac{\log n}{\log\log n}
\end{equation}
and  
$$
\log(m) = K(\rho) \log \log n,
$$
where $K(\rho) > 0$ is a constant depending only on $\rho$.

\vskip 0.125in

Set $N := \frac{m}{2 \rho}$ and let $x \in \left [0, \frac{1}{8N} \right]$.  We have that
$$
\left |\sum_{k=1}^N
\varphi_k(x)
\right|^2
\geq 
\left |\sum_{k=1}^N
\cos(-2 \pi k x)^2
\right|^2
\geq \left |\sum_{k=1}^N \frac{1}{2}
\right |^2 = \frac{N^2}{4}.
$$
Hence,
\begin{equation}
\label{eq:sharpness_1}
\left |\sum_{k=1}^N
\frac{\varphi_k(x)}{\sqrt{N}} 
\right| \geq 
\frac{\sqrt{N}}{2}.
\end{equation}
By the definition of $n$, we partition $[n]$ into $m^m$ disjoint and contiguous sets of size $N$,
$$
S_t := \left \{
j \in [n] :
N \cdot (t-1) + 1\leq j \leq t \cdot N
\right \}
$$
such that $t \in [m^m]$.  For all $t \in [m^m]$ and $x \in \left [0, \frac{1}{8N} \right]$, we have that
\begin{equation}
\label{eq:sharpness_3}
\left |\sum_{k \in S_t}
\frac{\varphi_k(x)}{\sqrt{N}} 
\right| \geq 
\frac{\sqrt{N}}{2}.
\end{equation}
Indeed,
$$
\left |\sum_{k \in S_t}
\frac{\varphi_k(x)}{\sqrt{N}} 
\right| =
|
\exp[-2 \pi i x \cdot ( - (t-1) \cdot N)]
|
\cdot \left |\sum_{k \in S_t}
\frac{\varphi_k(x)}{\sqrt{N}} 
\right|
$$
$$
=  
\left |\sum_{k=1}^N
\frac{\varphi_k(x)}{\sqrt{N}} 
\right| \geq 
\frac{\sqrt{N}}{2},
$$
where the last inequality comes from equation \ref{eq:sharpness_1}.  

\vskip 0.125in

We will show that 
\begin{equation}
\label{eq:sharpness_2}
p:= \Pr(J \cap S_t = S_t \text{ for some $t \in [m^m] $}) = \Omega(1)
\end{equation}
and prove the result later.  

First, assume that equation \ref{eq:sharpness_2} holds.  Then
$$
\mathbb{E}_J \left (\sup_{\|a\|_2 \leq 1} \left \| \sum_{k \in J}
a_k \varphi_k
\right\|_{\Phi}
\right )
$$
$$
\geq \mathbb{E}_J \left (\sup_{\|a\|_2 \leq 1} \left \| \sum_{k \in J}
a_k \varphi_k
\right\|_{\Phi} \Bigr|
J \cap S_t = S_t \text{ for some $t \in [m^m] $}
\right ) \cdot 
p =: I \cdot p.
$$
Hence, to prove equation \ref{eq:sharpness_main} is true, it suffices to lower bound $I$.  We fix $J \subset [n]$ such that $J \cap S_t = S_t$ for some $t \in [m^m]$.  For simplicity, we set
$$
f:= \sum_{k \in S_t}
\frac{\varphi_k}{\sqrt{|S_k|}}
=
\sum_{k \in S_t}
\frac{\varphi_k}{\sqrt{N}}. 
$$
Then 
$$
\sup_{\|a\|_2 \leq 1} \left \| \sum_{k \in J}
a_k \varphi_k \right \|_{\Phi}
\geq \left \| f \right \|_{\Phi}
$$
$$
= \inf \left \{
w > 0 :
\int_0^1 \Phi 
\left(
\frac{|f|}{w}
\right )
dx\leq 1
\right \}
$$
$$
\geq
\inf \left \{
w > 0 : 
\frac{\int_0^1 \left|f\right |^2
\log^{\alpha} \left ( 
 \frac{|f|}{w}
\right ) 
\bold{1} \left(
\frac{|f|}{w} \geq u_0
\right )
dx
}{w^2} 
\leq 1
\right \}
$$
$$
\geq
\inf \left \{
w > 0 : 
\frac{\int_0^{\frac{1}{8 N}} \left|f\right |^2
\log^{\alpha} \left ( 
 \frac{|f|}{w}
\right ) 
\bold{1} \left(
\frac{|f|}{w} \geq u_0
\right )
dx
}{w^2} 
\leq 1
\right \}
$$
$$
\geq
\inf \left \{
w > 0 : 
\frac{\int_0^{\frac{1}{8 N}} \frac{N}{4}
\log^{\alpha} \left (
\frac{\sqrt{N}}{2w}
\right ) 
\bold{1} \left(\frac{\sqrt{N}}{2w}
\geq u_0
\right )
dx
}{w^2} 
\leq 1
\right \}
$$
\begin{equation}
\label{eq:sharpness_4}
=
\inf \left \{
w > 0 : 
\frac{\frac{1}{32}
\log^{\alpha} \left (
\frac{\sqrt{N}}{2w}
\right ) 
\bold{1} \left(\frac{\sqrt{N}}{2w}
\geq u_0
\right )
}{w^2} 
\leq 1
\right \},
\end{equation}
where the third, fourth, and fifth inequalities come from the fact that if $A \subset B$, then $\inf B \leq \inf A$ and equation \ref{eq:sharpness_3}.

\vskip 0.125in
Suppose that $w \leq \frac{N^{\frac{1}{4}}}{2}$.  Then $\frac{\sqrt{N}}{2 w} 
\geq N^{\frac{1}{4}} \geq u_0$.  Thus, examining $w$ in the set of equation \ref{eq:sharpness_4}, where $0 <w \leq \frac{N^{\frac{1}{4}}}{2}$, we have that
$$
\frac{\frac{1}{32}
\log^{\alpha} \left (
\frac{\sqrt{N}}{2w}
\right ) 
\bold{1} \left(\frac{\sqrt{N}}{2w}
\geq u_0
\right )
}{w^2} 
$$
$$
=
\frac{\frac{1}{32}
\log^{\alpha} \left (
\frac{\sqrt{N}}{2w}
\right ) 
}{w^2} \leq 1,
$$
which implies that 
$$
\sqrt{\frac{1}{32}} 
\log^{\frac{\alpha}{2}} \left (
\frac{\sqrt{N}}{2w} 
\right ) \leq w,
$$
and hence, since $\frac{\sqrt{N}}{2 w} \geq N^{\frac{1}{4}}$,
$$
K(\alpha) \cdot \log^{\frac{\alpha}{2}}
(N) \leq w,
$$
where $K(\alpha) > 0$ is a constant depending only on $\alpha$.  Hence, since $N = \frac{m}{2 \rho}$ and recalling that 
$ 
m = K(\rho) \frac{\log n}{\log \log n}
$,
$$
I \geq \min \left (K(\alpha) \cdot \log^{\frac{\alpha}{2}}(N), \frac{N^{\frac{1}{4}}}{2} \right)
$$
$$
=
K(\alpha) \cdot \log^{\frac{\alpha}{2}}(N)
= 
K(\alpha) \cdot \log^{\frac{\alpha}{2}}\left (\frac{m}{2 \rho} \right)
$$
$$
= K(\alpha, \rho) \log^{\frac{\alpha}{2}}(m) = 
K(\alpha, \rho)  \log^{\frac{\alpha}{2}}(\log n),
$$
where $K(\alpha, \rho) > 0$ is a constant depending only on $\alpha$ and $\rho$. 

Hence,
$$
\mathbb{E}_J \left (\sup_{\|a\|_2 \leq 1} \left \| \sum_{k \in J}
a_k \varphi_k
\right\|_{\Phi}
\right ) \geq I \cdot p 
$$
$$
\geq K(\alpha, \rho)  \log^{\frac{\alpha}{2}}(\log n),
$$
as needed.

\vskip 0.125in
We now will show that 
$$
p:= \Pr(J \cap S_t = S_t \text{ for some $t \in [m^m] $}) = \Omega(1)
$$
to complete the proof.
By independence,
$$
1-p = \prod_{t=1}^{m^m}\Pr(J \cap S_t \not = S_t)
=
\prod_{t=1}^{m^m}(1 - \delta^N)
$$
$$
\leq \exp(- \delta^N m^m).
$$
To complete the proof, we show that 
$$
\delta^N \cdot m^m \geq 1.
$$
Indeed,
$$
\frac{1}{\delta^N}=
\log^{\frac{m}{2}}(n).
$$
Thus, by equation \ref{eq:sharpness_5},
$$
m^m = \left (K(\rho) \frac{\log n}{\log\log n} \right)^m
$$
$$
=
\log^{\frac{m}{2}}(n)
\cdot \left(
K(\rho)\frac{\log^{\frac{1}{2}}(n)}
{\log \log n}
\right)^m
\geq \log(n)^{\frac{m}{2}}= \frac{1}{\delta^N}.
$$
Hence, $\delta^N \cdot m^m \geq 1$.
Thus,
$$
1-p \leq \exp(-\delta^N m^m) \leq
\exp(-1),
$$
which implies that
$$
p \geq 1- \frac{1}{e}.
$$
\end{proof}

\end{document}